\documentclass{article}

\usepackage[unicode=true,
  linktocpage,
  linkbordercolor={0.5 0.5 1},
  citebordercolor={0.5 1 0.5},
  linkcolor=blue]{hyperref}
\usepackage{amsmath, amsfonts, amssymb, amsthm}
\usepackage{array, makecell}
\usepackage{fge}
\usepackage{enumitem}
\usepackage{graphicx}
\usepackage{afterpage}
\usepackage{float}
\usepackage{longtable}
\usepackage{pdflscape}
\usepackage[numbers,sort&compress]{natbib}
\usepackage{psfrag}
\usepackage{parskip}
\usepackage[toc,page]{appendix}
\usepackage{tensor}
\usepackage{tikz,tikz-cd}
\usetikzlibrary{decorations.pathmorphing,hobby}
\usepackage{thmtools,thm-restate}
\usepackage{circledtext}
\usepackage[dvipsnames]{xcolor}

\usepackage{nameref}

\usepackage[margin=0.85in]{geometry}

\begingroup
    \makeatletter
    \@for\theoremstyle:=definition,remark,plain\do{%
        \expandafter\g@addto@macro\csname th@\theoremstyle\endcsname{%
            \addtolength\thm@preskip\parskip
            }%
        }
\endgroup

\theoremstyle{plain}

\theoremstyle{definition}

\newtheorem{propn}{Proposition}[section]
\newtheorem{rmk}{Remark}[section]

\theoremstyle{remark}

\newcommand\numberthis{\addtocounter{equation}{1}\tag{\theequation}}

\numberwithin{equation}{section}

\hypersetup{
    colorlinks=true,
    linkcolor=BrickRed,
    filecolor=Apricot,      
    urlcolor=Rhodamine,
    citecolor=ForestGreen,
    anchorcolor=Fuchsia,
}

\newcommand{\bb}[1]{
    \mathbb{#1}
}

\newcommand{\wtilde}[1]{
    \widetilde{#1}
}

\newcommand{\eps}{
    \epsilon
}

\newcommand{\rarr}{
    \rightarrow
}
\newcommand{\del}{
    \partial
}
\newcommand{\delbar}{
    \overline{\partial}
}
\newcommand{\nablabar}{
    \overline{\nabla}
}

\newcommand{\inner}[2]{
    \langle #1, #2 \rangle
}

\newcommand{\Binner}[2]{
    \Big\langle #1, #2 \Big\rangle
}

\newcommand{\tr}{
    \text{\normalfont{tr}}
}
\newcommand{\Rm}{
    \text{\normalfont{Rm}}
}

\newcommand{\vol}{
    \text{\normalfont{vol}}
}

\renewcommand{\Re}{
    \text{\normalfont{Re}}\,
}

\newcommand{\cf}{
    \text{\emph{cf.,}}
}

\title{Anomaly Flow II: Estimates and Long-Time Existence for the Coupled Flow}
\author{Caleb Suan \\ \texttt{kwsuan@math.cuhk.edu.hk}}
\date{}

\begin{document}

\maketitle

\begin{abstract}
\label{abstract}
    In this follow-up to \cite{Sua25}, we extend the results to the coupled anomaly flow which incorporates the evolution of a Hermitian metric on a holomorphic vector bundle over a compact complex $3$-fold with general slope parameter $\alpha'$. This yields a similar sufficient smallness condition on the slope parameter $\alpha'$ in order to extend the flow on $[0,\tau)$ to $[0,\tau + \eps)$. We also augment the argument of \cite{Sua25} slightly to fix the peculiarity of the units in the condition for $\alpha'$ for the $k = 1$ case.
\end{abstract}

\tableofcontents

\section{Introduction}
\label{sect-intro}

This paper is a follow-up to the author's previous one \cite{Sua25} on the anomaly flow. Here, we consider a coupled version of the flow, which includes the effects of a gauge bundle $E$ over our base Calabi--Yau $3$-fold $X$.

Consider a compact complex $3$-fold $X$ equipped with a nowhere-vanishing holomorphic $(3,0)$-form $\Omega$ and a holomorphic vector bundle $E \rarr X$. Recall that for a real parameter $\alpha'$, the (coupled) anomaly flow is a geometric flow of a pair of Hermitian metrics $\omega$ on $X$ and $H$ on $E$ whose stationary points are solutions to the Hull--Strominger system \cite{Hul86,Str86}:
\begin{gather*}
    F^{2,0} = F^{0,2} = 0, \qquad \omega^2 \wedge F^{1,1} = 0, \numberthis \label{eqn-HS-1} \\
    \sqrt{-1} \del \delbar \omega = \alpha' \big( \tr (\Rm \wedge \Rm) - \tr (F \wedge F) \big), \numberthis \label{eqn-HS-2} \\
    d \big( |\Omega|_\omega \omega^2 \big) = 0. \numberthis \label{eqn-HS-3}
\end{gather*}
Here $\Rm$ and $F$ are the Chern curvatures of $\omega$ and $H$ respectively, though some authors \cite{Puj21, PU21} use other connections instead.

The evolution equations for the flow are given by
\begin{align}
\label{eqn-anomaly-flow}
    \begin{cases}
        H^{-1} \big( \del_t H \big) &= - \Big( \frac{1}{2 |\Omega|_\omega} \Big) \Lambda_\omega F, \\
        \del_t \big( |\Omega|_\omega \omega^2 \big) &= \sqrt{-1} \del \delbar \omega - \alpha' \big( \tr (\Rm \wedge \Rm) - \tr (F \wedge F) \big).
    \end{cases}
\end{align}
By Chern--Weil theory, we see that the conformally balanced condition \eqref{eqn-HS-3} is preserved given a conformally balanced initial condition for $\omega$. For more on the anomaly flow and its generalisations in various settings, we refer the reader to \cite{GFMPS25, KPS26, PPZ18a, PPZ18b, PPZ18c, PPZ19a, FP21, FHP21, PU21, Puj21, BV22, AMP24}.

\begin{rmk}
    We note that the original formulation of the anomaly flow in \cite{PPZ18c} does not include the dilaton factor $\Big( \frac{1}{2 |\Omega|_\omega} \Big)$ in the evolution equation for $H$. We add this factor in for convenience and to have the evolution equation for $F$ in line with those for $\Rm$ and $T$ (see \S \ref{sect-evol-eqns} below). Since $\Omega$ is nowhere-vanishing, the modified flow still has the same fixed points and also retains short-time existence and uniqueness.
\end{rmk}

Our main result is the following:

\begin{restatable}{thm}{thmmain}
\label{thm-main}
    Suppose that there exist positive constants $B_{min}$, $B_{max}$, $C_{-1}$, and $C_0$ such that
    \begin{gather*}
        B_{min} \leq \Big( \frac{1}{2 |\Omega|_\omega} \Big) \leq B_{max}, \\
        |T|, |\overline{T}| \leq C_{-1}, \qquad |\Rm|, |DT|, |D\overline{T}|, |F| \leq C_0
    \end{gather*}
    along the anomaly flow \eqref{eqn-anomaly-flow} on $t \in [0,\tau)$. There exists a positive constant $M'$ such that if
    \begin{align*}
        \alpha' < M' \cdot \frac{B_{min}^2}{a_0 B_{max}^2 C_0},
    \end{align*}
    then the flow can be extended to $[0, \tau + \eps)$ for some $\eps > 0$.
\end{restatable}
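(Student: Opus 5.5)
The plan is to follow the strategy of \cite{Sua25}. Uniform bounds on all higher derivatives of $\Rm$, $T$ and $F$ on $[0,\tau)$ let $(\omega(t),H(t))$ converge smoothly as $t\rarr\tau$, and short-time existence then extends the flow. The limiting metrics are nondegenerate because $|\Omega|_\omega$ is pinched between $B_{min}$ and $B_{max}$; this gives a uniform $C^0$ bound on $\del_t\omega$, and hence equivalence of $\omega(t)$ to $\omega(0)$. Likewise $H^{-1}\del_t H$ is bounded by $B_{max}|F|\le B_{max}C_0$, so $H(t)$ is uniformly equivalent to $H(0)$. The main work is therefore derivative estimates of Shi type: for each $k\ge1$ we bound $|D^k\Rm|$, $|D^{k+1}T|$ and $|D^kF|$ on $[0,\tau)$ in terms of $k$, $B_{min}$, $B_{max}$, $C_{-1}$, $C_0$, $\tau$ and the initial data. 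Here $D$ is the Chern connection, coupled on $E$ to the Chern connection of $H$.

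\textbf{Step 1: evolution equations.} From \S\ref{sect-evol-eqns} I would use that $\del_t$ of $\Rm$, $T$ and $F$ is $\frac{1}{2|\Omega|_\omega}$ times a Laplacian plus lower order terms. The new coupled terms are:
\begin{itemize}
\item $\tr(F\wedge F)$, which contributes terms of the form $\alpha' (D^2F * F + DF*DF)$ to $\del_t \Rm$;
\item $\del_t F = -D\delbar D(\frac{1}{2|\Omega|_\omega}\Lambda_\omega F)$, whose leading part is $\frac{1}{2|\Omega|_\omega}\Delta F$ plus terms $F*\Rm$, $T*DF$, and derivatives of the dilaton. The latter are controlled by $T$ via the conformally balanced condition.
\end{itemize}
The crucial structural point is that the $\alpha'$ terms in $\del_t\Rm$ are second order in $\Rm$ (namely $\alpha'\Rm * D^2\Rm$) and in $F$. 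They therefore contaminate the principal symbol, and the system is parabolic only if $\alpha'|\Rm|$ and $\alpha'|F|$ are small compared with the coefficient $\frac{1}{2|\Omega|_\omega}\ge B_{min}$.

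\textbf{Step 2: test quantity.} For $k=1$ I would set
\[
G=(A+|\Rm|^2+|DT|^2+|F|^2)\,\big(|D\Rm|^2+|D^2T|^2+|DF|^2\big)+B\,|T|^2\text{-type terms},
\]
following \cite{Sua25}. To fix the units issue, I would normalise $|\Rm|^2$ and $|F|^2$ by $C_0^{-2}$ and the $T$-terms by $C_{-1}^{-2}$. Computing $(\del_t-\frac{1}{2|\Omega|_\omega}\Delta)G$, the good terms are
\[
-B_{min}\big(|D^2\Rm|^2+|D^2F|^2+|D^3T|^2\big).
\]
The bad top-order terms are $\alpha' B_{max}C_0\,|D^2\Rm|\,|D\Rm|\cdots$ and their $F$ analogues, each weighted by the prefactor. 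Absorbing them by Cauchy--Schwarz requires
\[
\alpha' B_{max}C_0\lesssim B_{min}^2/(a_0 B_{max}),
\]
where $a_0$ is the dimensional constant of the contraction estimates; this produces the stated threshold with $M'$ universal. The remaining terms are then bounded by $CG^{3/2}$ or $CG$, which yields $G\le C(1+t^{-1})$. Since the flow is smooth at $t=0$, this gives a uniform bound on $[0,\tau)$.

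\textbf{Step 3: higher derivatives and extension.} For $k\ge2$ the analogous quantity $|D^{k-1}\Rm|^2|D^k\Rm|^2+\dots$ has highest-order bad terms that are again linear in $\alpha'$ with coefficients already bounded. Induction under the same smallness condition therefore closes without any new constraint. Converting $D$-bounds to bounds in fixed coordinates via the metric equivalence gives smooth convergence, and short-time existence of \eqref{eqn-anomaly-flow} extends the flow. The main obstacle is Step 2: tracking the exact powers of $B_{min}$, $B_{max}$ and $C_0$ in the absorption. In particular, the derivatives of $|\Omega|_\omega$ in the coefficient, which produce $T$-terms multiplying $D^2\Rm$, must be absorbed using $C_{-1}$ without spoiling the dimensionless form of the condition.
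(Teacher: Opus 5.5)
There is a genuine gap in Step 2, and it carries over to Step 3. You assert that the worst $\alpha'$ contributions have the form $\alpha' B_{max} C_0\,|D^2\Rm|\,|D\Rm|\cdots$. You then absorb them by Cauchy--Schwarz into the good term $-B_{min}\big(|D^2\Rm|^2+|D^3T|^2+|D^2F|^2\big)$ and run a maximum-principle argument. That misidentifies their order.

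Since $\del_t\Rm$ contains $\alpha'\nabla\nablabar(\Rm*\Rm+F*F)\ni\alpha'(\Rm*\nabla\nablabar\Rm+F*\nabla\nablabar F)$, the evolution of $|D\Rm|^2$ contains $\alpha'\,\Rm*D^3\Rm*D\Rm$ and $\alpha'\,F*D^3F*D\Rm$. Similarly, $|D^2T|^2$ picks up $\alpha'\,\Rm*D^3\Rm*D^2T$. These are one derivative above anything the good term controls, so no pointwise Young or Cauchy--Schwarz inequality absorbs them.

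What one can do pointwise, and what the paper does, is write them as divergences $2\alpha'\Re\big(\nablabar_{\overline{j}}\langle \frac{1}{2|\Omega|_\omega}\nabla^{m+1}\nablabar^l(\Rm*\Rm),\nabla^m\nablabar^l\Rm\rangle^{\overline{j}}\big)$ (and analogues), plus terms bounded by $a_0B_{max}C_0\alpha'\,G_2$. A divergence of a tensorial quantity that is not built from $G$ has no sign at a spatial maximum of $G$. The principal part $\tfrac12\Delta_R+\alpha'\Rm*\nabla\nablabar$ is a non-scalar, system-type perturbation. This is why the maximum principle, which works for $\alpha'=0$, does not close here; already $\del_t G_0$ contains such a divergence term.

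The missing idea is to switch to integral estimates.
\begin{itemize}
\item The paper estimates $\del_t\int_X G^p\,\vol$ for $p\ge3$ and integrates each divergence by parts, so the derivative lands on $G^{p-1}$.
\item The resulting $p(p-1)\int_X G^{p-2}|\nabla G|\cdots$ is absorbed into the good term $-B_{min}p(p-1)\int_X G^{p-2}|\nabla G|^2$ coming from the Laplacian. This absorption is what forces $\alpha'$ to be small, with a factor of $p$.
\item For $k=1$ the paper uses $G=[(\alpha')^2G_0+\mu]G_1$ with $\mu=N B_{min}C_0\alpha'/(a_0B_{max})$, rather than your $C_0^{-2}$ normalisation. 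This choice is what produces the factor $B_{min}^2/B_{max}^2$ in the threshold.
\item Gr\"{o}nwall then gives uniform bounds on $\int_X G_k^p$ and $\int_X G_{k+1}^p$. These are $L^{2p}$ bounds on $D^k\Rm$ and $D^{k+1}\Rm$, and on the $T$ and $F$ analogues.
\item Sobolev embedding in real dimension $6$ converts these into pointwise bounds, and bootstrapping handles all orders.
\end{itemize}

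Your Step 3 needs the same mechanism. For $k\ge2$ the terms $\alpha'\Rm*D^{k+2}\Rm*D^k\Rm$ again require integration by parts. They impose $\alpha'<B_{min}/(8a_0B_{max}C_0p)$, with $16$ in place of $8$ when $k=2$; this is implied by the $k=1$ condition, but the pointwise argument you describe would not produce it. Your outer framework agrees with the paper: uniform equivalence of $\omega(t)$ and $H(t)$ from the dilaton pinching and the bound on $F$, smooth convergence, and restarting by short-time existence.
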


\begin{rmk}
    For simplicity, we shall implicitly assume that $\alpha'$ is non-negative. The results still hold when $\alpha' < 0$ if we replace all instances of $\alpha'$ with $|\alpha'|$.
\end{rmk}

Much of the methodology is the same as that of \cite{Sua25}. As such, we omit most of the steps and only present the results of the major computations.

\textbf{Acknowledgements:} The author thanks S\'{e}bastien Picard and Man Chun Lee for helpful discussions and comments.

\section{Evolution Equations}
\label{sect-evol-eqns}

We first briefly review the relevant evolution equations for the anomaly flow \eqref{eqn-anomaly-flow}, originally derived by \cite{PPZ18b}. Using the schematic equations from \S 2 of \cite{Sua25} and substituting in our choice of $\Phi = \tr (F  \wedge F)$, we have
\begin{align*}
    \del_t g &= \Big( \frac{1}{2 |\Omega|_\omega} \Big) \cdot \Big[ \Rm + T * \overline{T} + \alpha' \big( \Rm * \Rm + F * F \big) \Big], \numberthis \label{eqn-del_t-g} \\[5pt]
    \del_t \Rm &= \Big( \frac{1}{2 |\Omega|_\omega} \Big) \cdot \Big[ \frac{1}{2} \Delta_R \Rm + H_1 + \alpha' \big( \nabla \nablabar (\Rm * \Rm) + \nabla \nablabar (F * F) + H_2 \big) \Big], \numberthis \label{eqn-del_t-Rm} \\[5pt]
    \del_t T &= \Big( \frac{1}{2 |\Omega|_\omega} \Big) \cdot \Big[ \frac{1}{2} \Delta_R T + K_1 + \alpha' \big( \nabla (\Rm * \Rm) + \nabla (F * F) + K_2 \big) \Big]. \numberthis \label{eqn-del_t-T}
\end{align*}

Here $\Delta_R$ is the Laplacian operator given in coordinates by $g^{i\overline{j}} \big( \nabla_i \nablabar_{\overline{j}} + \nablabar_{\overline{j}} \nabla_i \big)$. The $*$ notation above denotes finitely many linear contractions using the metrics $\omega$ and $H$ while $\nabla$ denotes covariant derivatives induced by their Chern connections. The terms $H_1, H_2$, and $K_1, K_2$ are given schematically by
\begin{align*}
    H_1 &= \nabla \nablabar (T * \overline{T}) + \nablabar (T * \Rm) + \nabla (\overline{T} * \Rm) + \Rm * \Rm \\
    &\quad + \nablabar (T * T * \overline{T}) + \nabla (\overline{T} * \overline{T} * T) + T * \overline{T} * \Rm + T * T * \overline{T} * \overline{T}, \\[5pt]
    H_2 &= \nablabar (T * \Rm * \Rm) + \nabla (\overline{T} * \Rm * \Rm) + \nablabar (T * F * F) + \nabla (\overline{T} * F * F) \\
    &\quad + \Rm * \Rm * \Rm + \Rm * F * F  + T * \overline{T} * \Rm * \Rm + T * \overline{T} * F * F,
\end{align*}
and
\begin{align*}
    K_1 &= \nabla (T * \overline{T}) + T * \Rm + T * T * \overline{T}, \\[5pt]
    K_2 &= T * \Rm * \Rm + T * F * F.
\end{align*}

The important thing to keep track of for these terms are the number of covariant derivatives on $\Rm, T$ and $\overline{T}$, and $F$. We see from the above that
\begin{itemize}
    \item $H_1, H_2$: at most $1$ derivative on $\Rm$, $2$ derivatives on $T$ and $\overline{T}$, and $1$ derivative on $F$,

    \item $K_1, K_2$: at most $0$ derivatives on $\Rm$, $1$ derivative on $T$ and $\overline{T}$, and $0$ derivatives on $F$.
\end{itemize}
We group these terms together because they are generically well-behaved for our purposes and leaves behind the ``bad" curvature terms with too many non-Laplacian derivatives.

For the evolution of the curvature $F$, we have the following proposition:

\begin{propn}
    Under the anomaly flow \eqref{eqn-anomaly-flow}, the curvature $F$ evolves by
    \begin{align*}
        \del_t \tensor{F}{_i_{\overline{j}}^\alpha_\beta} = \Big( \frac{1}{2 |\Omega|_\omega} \Big) \cdot &\Big[ g^{p\overline{q}} \nabla_p \nablabar_{\overline{q}} \tensor{F}{_i_{\overline{j}}^\alpha_\beta} + g^{p \overline{q}} \tensor{R}{_p_{\overline{j}}^l_i} \tensor{F}{_l_{\overline{q}}^\alpha_\beta} - g^{p \overline{q}} \tensor{R}{_p_{\overline{j}}_{\overline{q}}^{\overline{l}}} \tensor{F}{_i_{\overline{l}}^\alpha_\beta} \\
        &\quad - g^{p \overline{q}} \tensor{F}{_p_{\overline{j}}^\alpha_\gamma} \tensor{F}{_i_{\overline{q}}^\gamma_\beta} + g^{p \overline{q}} \tensor{F}{_p_{\overline{j}}^\gamma_\beta} \tensor{F}{_i_{\overline{q}}^\alpha_\gamma} + g^{p \overline{q}} \nablabar_{\overline{j}} \big[ \tensor{T}{_i_p^l} \tensor{F}{_l_{\overline{q}}^\alpha_\beta} \big] - g^{p \overline{q}} \nabla_p \big[ \tensor{\overline{T}}{_{\overline{q}}_{\overline{j}}^{\overline{m}}} \tensor{F}{_i_{\overline{m}}^\alpha_\beta} \big] \\
        &\qquad + g^{p\overline{q}} \overline{T}_{\overline{j}} \nabla_i \tensor{F}{_p_{\overline{q}}^\alpha_\beta} + g^{p\overline{q}} T_i \nablabar_{\overline{j}} \tensor{F}{_p_{\overline{q}}^\alpha_\beta} + g^{p\overline{q}} \big[ \nablabar_{\overline{j}} T_i \big] \tensor{F}{_p_{\overline{q}}^\alpha_\beta} \Big] . \numberthis \label{eqn-del_t-F-coords}
    \end{align*}
\end{propn}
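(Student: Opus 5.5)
The starting point is the standard variation formula for Chern curvature. Since $F = \delbar(\del H \cdot H^{-1})$, or in components $\tensor{F}{_i_{\overline{j}}^\alpha_\beta} = -\del_{\overline{j}}\big(H^{\alpha\overline{\gamma}}\del_i H_{\beta\overline{\gamma}}\big)$, we have
\begin{align*}
    \del_t \tensor{F}{_i_{\overline{j}}^\alpha_\beta} = -\nablabar_{\overline{j}} \nabla_i \big( H^{-1}\del_t H \big)^\alpha{}_\beta .
\end{align*}
Write $\Theta = \big(\frac{1}{2|\Omega|_\omega}\big)$ and $\Lambda_\omega F = g^{p\overline{q}} F_{p\overline{q}}$. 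The flow equation for $H$ then gives $\del_t F_{i\overline{j}} = \nablabar_{\overline{j}}\nabla_i\big(\Theta\, g^{p\overline{q}}F_{p\overline{q}}\big)$. The metric $g$ is parallel for the Chern connection, so $g^{p\overline{q}}$ commutes with $\nabla,\nablabar$. Applying Leibniz to $\Theta$, the only terms not multiplied by $\Theta$ are those where derivatives fall on $\Theta$.

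\textbf{Step 1: the dilaton terms.} I use the conformally balanced condition in the torsion form. We have $\del_i \log|\Omega|_\omega = -T_i$ up to convention, where $T_i = \tensor{T}{_i_p^p}$ is the torsion 1-form, and similarly for $\delbar$. Then $\nabla_i\Theta = \Theta\, T_i$ and $\nablabar_{\overline{j}}\Theta = \Theta\,\overline{T}_{\overline{j}}$, with the sign fixed by the paper's conventions. Also $\nablabar_{\overline{j}}\nabla_i\Theta = \Theta\big(\nablabar_{\overline{j}}T_i + \overline{T}_{\overline{j}}T_i\big)$. I expect the $T_i\overline{T}_{\overline{j}}\Lambda F$ pieces to cancel, or to be absorbed using $\nablabar_{\overline{j}}T_i$ and the identity relating $\del\delbar\log|\Omega|$ to $\nablabar T$. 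This produces the last three terms $\overline{T}_{\overline{j}}\nabla_i\Lambda F + T_i\nablabar_{\overline{j}}\Lambda F + (\nablabar_{\overline{j}}T_i)\Lambda F$. Getting these exact coefficients right is the first bookkeeping point; I would check it against the formula $\del_t F$ without dilaton in \cite{PPZ18b}.

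\textbf{Step 2: the main term, via commutation and Bianchi.} I compute $g^{p\overline{q}}\nablabar_{\overline{j}}\nabla_i F_{p\overline{q}}$ and convert it to $g^{p\overline{q}}\nabla_p\nablabar_{\overline{q}}F_{i\overline{j}}$.

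First apply the Bianchi identities for the Chern connection on a non-Kähler base:
\begin{align*}
    \nabla_i F_{p\overline{q}} = \nabla_p F_{i\overline{q}} + \tensor{T}{_i_p^l}F_{l\overline{q}}
\end{align*}
(up to sign), coming from $d_\nabla F=0$ with torsion, and similarly $\nablabar_{\overline{j}} F_{p\overline{q}} = \nablabar_{\overline{q}}F_{p\overline{j}} + \overline{T}$-term. Using these, rewrite $\nablabar_{\overline{j}}\nabla_i F_{p\overline{q}}$ as $\nablabar_{\overline{j}}\nabla_p F_{i\overline{q}} + \nablabar_{\overline{j}}(T F)$.

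Then commute $\nablabar_{\overline{j}}$ past $\nabla_p$. The commutator on a $(1,1)$-form with endomorphism values gives curvature terms. The $\Rm$ acting on the index $i$ and on $\overline{q}$ gives the two $\Rm*F$ terms. The $F$ acting on the endomorphism indices gives the commutator $[F_{p\overline{j}}, F_{i\overline{q}}]$, which is the two $F*F$ terms.

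Finally apply Bianchi again to move $\overline{j}$ and $\overline{q}$: $\nablabar_{\overline{j}}F_{i\overline{q}} = \nablabar_{\overline{q}}F_{i\overline{j}} + \overline{T}F$. This yields $g^{p\overline{q}}\nabla_p\nablabar_{\overline{q}}F_{i\overline{j}} - g^{p\overline{q}}\nabla_p(\overline{T}_{\overline{q}\overline{j}}{}^{\overline{m}}F_{i\overline{m}})$. Note that the torsion term from the first Bianchi step stays inside $\nablabar_{\overline{j}}$, matching the stated $\nablabar_{\overline{j}}[T F]$ form.

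\textbf{Main obstacle.} The main difficulty is index and sign bookkeeping: the conventions for $T$, for $\tensor{R}{_p_{\overline{j}}_{\overline{q}}^{\overline{l}}}$ (the conjugate action), and for the sign of $\del\log|\Omega|_\omega$ in terms of $T_i$. I would fix these by matching the known evolution of $\Rm$ in \cite{PPZ18b}. The coupling $\partial_t g$ plays no role here, since $F$ depends only on $H$; the metric enters only through the contractions $g^{p\overline{q}}$ and the Chern connection of $\omega$ used in the covariant derivatives.
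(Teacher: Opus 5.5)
Your plan follows the paper's proof step for step: $\del_t F = -\nablabar\nabla(H^{-1}\del_t H)$, then Leibniz using $\nabla_i\Theta = \Theta T_i$ and $\nablabar_{\overline{j}}\Theta = \Theta\overline{T}_{\overline{j}}$ (with $\Theta = \frac{1}{2|\Omega|_\omega}$ as in your notation), then Bianchi, the Ricci identity for $[\nabla_p,\nablabar_{\overline{j}}]$, and Bianchi again. Your sign $\del_t F_{i\overline{j}} = g^{p\overline{q}}\nablabar_{\overline{j}}\nabla_i(\Theta F_{p\overline{q}})$ is the right one; the paper's line $A = \Theta\Lambda_\omega F$ has dropped a minus sign.

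The genuine problem is the step where you expect the $T_i\overline{T}_{\overline{j}}\Lambda_\omega F$ piece to cancel or be absorbed. It does neither. The Leibniz expansion is
\begin{align*}
    \nablabar_{\overline{j}}\nabla_i\big(\Theta F_{p\overline{q}}\big) = \Theta\Big[\nablabar_{\overline{j}}\nabla_i F_{p\overline{q}} + \overline{T}_{\overline{j}}\nabla_i F_{p\overline{q}} + T_i\nablabar_{\overline{j}}F_{p\overline{q}} + \big(\nablabar_{\overline{j}}T_i\big)F_{p\overline{q}} + T_i\overline{T}_{\overline{j}}F_{p\overline{q}}\Big],
\end{align*}
and nothing else in the computation can cancel the last term. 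The Bianchi and commutation treatment of $\nablabar_{\overline{j}}\nabla_i F_{p\overline{q}}$ only produces $\Rm * F$, $F * F$, $\nablabar_{\overline{j}}(T F)$ and $\nabla_p(\overline{T} F)$ terms. This is because the Chern torsion has no $(1,1)$-part, so $[\nabla_p,\nablabar_{\overline{j}}]$ is pure curvature. Nor does any identity turn $T_i\overline{T}_{\overline{j}}$ into $\nablabar_{\overline{j}}T_i$: we have $\Theta^{-1}\del_{\overline{j}}\del_i\Theta = \del_{\overline{j}}\del_i\log\Theta + \del_i\log\Theta\,\del_{\overline{j}}\log\Theta$, and the Hessian and the product term are independent in general. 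Comparing with \cite{PPZ18b} would not settle these coefficients either, since there is no dilaton factor in the $H$-equation there.

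In fact the paper's own proof drops exactly this term in its first Leibniz line. So the displayed formula is missing $g^{p\overline{q}}T_i\overline{T}_{\overline{j}}F_{p\overline{q}}$ inside the bracket. Your argument, carried out honestly, proves the displayed identity plus this term, and you should keep it rather than try to argue it away.

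It is harmless for everything downstream. It is a zeroth-order $T*\overline{T}*F$ term, bounded by a constant times $C_{-1}^2|F|$. It therefore only adds $T*\overline{T}*F$ to the schematic equation \eqref{eqn-del_t-F}, and its derivatives are handled like the other lower-order terms in the Shi-type estimates.
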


\begin{proof}
    Since the complex structure on $X$ and holomorphic structure on $E$ are fixed, we can use that under the general flow
    \begin{align*}
        H^{-1} \big( \del_t H \big) = A
    \end{align*}
    the curvature evolves by
    \begin{align*}
        \del_t F = - \nablabar \nabla A.
    \end{align*}

    In our case, we have
    \begin{align*}
        A = \Big( \frac{1}{2 |\Omega|_\omega} \Big) \Lambda_\omega F.
    \end{align*}
    We use coordinates to rewrite this evolution equation.
    
    First, we recall the differential Bianchi identity
    \begin{align*}
        \nabla_p \tensor{F}{_i_{\overline{j}}^\alpha_\beta} - \nabla_i \tensor{F}{_p_{\overline{j}}^\alpha_\beta} = \tensor{T}{_p_i^l} \tensor{F}{_l_{\overline{j}}^\alpha_\beta}, \quad \nablabar_{\overline{q}} \tensor{F}{_i_{\overline{j}}^\alpha_\beta} - \nablabar_{\overline{j}} \tensor{F}{_i_{\overline{q}}^\alpha_\beta} = \tensor{\overline{T}}{_{\overline{q}}_{\overline{j}}^{\overline{m}}} \tensor{F}{_i_{\overline{m}}^\alpha_\beta}
    \end{align*}
    and the fact that
    \begin{align*}
        \nabla_i \Big( \frac{1}{2 |\Omega|_\omega} \Big) = \Big( \frac{1}{2 |\Omega|_\omega} \Big) T_i, \qquad \nablabar_{\overline{j}} \Big( \frac{1}{2 |\Omega|_\omega} \Big) = \Big( \frac{1}{2 |\Omega|_\omega} \Big) \overline{T}_{\overline{j}}.
    \end{align*}
    
    Using these and the Ricci identity, we have
    \begin{align*}
        \tensor{\del_t F}{_i_{\overline{j}}^\alpha_\beta} &= g^{p \overline{q}} \nablabar_{\overline{j}} \nabla_i \Big[ \Big( \frac{1}{2 |\Omega|_\omega} \Big) \cdot \tensor{F}{_p_{\overline{q}}^\alpha_\beta} \Big] \\
        &= \Big( \frac{1}{2 |\Omega|_\omega} \Big) \cdot \Big[ g^{p\overline{q}} \nablabar_{\overline{j}} \nabla_i \tensor{F}{_p_{\overline{q}}^\alpha_\beta} + g^{p\overline{q}} \overline{T}_{\overline{j}} \nabla_i \tensor{F}{_p_{\overline{q}}^\alpha_\beta} + g^{p\overline{q}} T_i \nablabar_{\overline{j}} \tensor{F}{_p_{\overline{q}}^\alpha_\beta} + g^{p\overline{q}} \big[ \nablabar_{\overline{j}} T_i \big] \tensor{F}{_p_{\overline{q}}^\alpha_\beta} \Big]
    \end{align*}

    The highest order term in $F$ is then
    \begin{align*}
        g^{p\overline{q}} \nablabar_{\overline{j}} \nabla_i \tensor{F}{_p_{\overline{q}}^\alpha_\beta} &= g^{p \overline{q}} \nablabar_{\overline{j}} \nabla_p \tensor{F}{_i_{\overline{q}}^\alpha_\beta} + g^{p \overline{q}} \nablabar_{\overline{j}} \big[ \tensor{T}{_i_p^l} \tensor{F}{_l_{\overline{q}}^\alpha_\beta} \big] \\
        &= g^{p \overline{q}} \nabla_p \nablabar_{\overline{j}} \tensor{F}{_i_{\overline{q}}^\alpha_\beta} + g^{p \overline{q}} \tensor{R}{_p_{\overline{j}}^l_i} \tensor{F}{_l_{\overline{q}}^\alpha_\beta} - g^{p \overline{q}} \tensor{R}{_p_{\overline{j}}_{\overline{q}}^{\overline{l}}} \tensor{F}{_i_{\overline{l}}^\alpha_\beta} \\
        &\quad - g^{p \overline{q}} \tensor{F}{_p_{\overline{j}}^\alpha_\gamma} \tensor{F}{_i_{\overline{q}}^\gamma_\beta} + g^{p \overline{q}} \tensor{F}{_p_{\overline{j}}^\gamma_\beta} \tensor{F}{_i_{\overline{q}}^\alpha_\gamma} + g^{p \overline{q}} \nablabar_{\overline{j}} \big[ \tensor{T}{_i_p^l} \tensor{F}{_l_{\overline{q}}^\alpha_\beta} \big] \\
        &= g^{p\overline{q}} \nabla_p \nablabar_{\overline{q}} \tensor{F}{_i_{\overline{j}}^\alpha_\beta} + g^{p \overline{q}} \tensor{R}{_p_{\overline{j}}^l_i} \tensor{F}{_l_{\overline{q}}^\alpha_\beta} - g^{p \overline{q}} \tensor{R}{_p_{\overline{j}}_{\overline{q}}^{\overline{l}}} \tensor{F}{_i_{\overline{l}}^\alpha_\beta} \\
        &\quad - g^{p \overline{q}} \tensor{F}{_p_{\overline{j}}^\alpha_\gamma} \tensor{F}{_i_{\overline{q}}^\gamma_\beta} + g^{p \overline{q}} \tensor{F}{_p_{\overline{j}}^\gamma_\beta} \tensor{F}{_i_{\overline{q}}^\alpha_\gamma} + g^{p \overline{q}} \nablabar_{\overline{j}} \big[ \tensor{T}{_i_p^l} \tensor{F}{_l_{\overline{q}}^\alpha_\beta} \big] - g^{p \overline{q}} \nabla_p \big[ \tensor{\overline{T}}{_{\overline{q}}_{\overline{j}}^{\overline{m}}} \tensor{F}{_i_{\overline{m}}^\alpha_\beta} \big],
    \end{align*}
    which yields the desired expression.

\end{proof}

From this, our schematic evolution equation for $F$ is
\begin{align}
    \del_t F = \Big( \frac{1}{2 |\Omega|_\omega} \Big) \cdot \Big[ \frac{1}{2} \Delta_R F + \Rm * F + F * F + \nablabar (T * F) + \nabla (\overline{T} * F) \Big], \label{eqn-del_t-F}
\end{align}
where we have used the Ricci identity on the expression \eqref{eqn-del_t-F-coords} to convert $\Delta$ into $\frac{1}{2} \Delta_R$.

From this, we get the following expressions for the evolution of covariant derivatives of $\Rm$, $T$, and $F$:
\begin{align*}
    \del_t \big( \nabla^m \nablabar^l \Rm \big) &= \sum_{i+j>0} \sum_{i=0}^m \sum_{j=0}^l (\nabla^{m-i} \nablabar^{l-j} \Rm) * \big( \nabla^i \nablabar^j (\del_t g) \big) \\
    &\quad + \nabla^m \nablabar^l \bigg( \Big( \frac{1}{2 |\Omega|_\omega} \Big) \cdot \Big[ \frac{1}{2} \Delta_R \Rm + H_1 + \alpha' \big( \nabla \nablabar (\Rm * \Rm) + \nabla \nablabar (F * F) + H_2 \big) \Big] \bigg), \numberthis \label{eqn-del_t-m-l-Rm} \\[5pt]
    \del_t \big( \nabla^m \nablabar^l T \big) &= \sum_{i+j>0} \sum_{i=0}^m \sum_{j=0}^l (\nabla^{m-i} \nablabar^{l-j} T) * \big( \nabla^i \nablabar^j (\del_t g) \big) \\
    &\quad + \nabla^m \nablabar^l \bigg( \Big( \frac{1}{2 |\Omega|_\omega} \Big) \cdot \Big[ \frac{1}{2} \Delta_R T + K_1 + \alpha' \big( \nabla (\Rm * \Rm) + \nabla (F * F) + K_2 \big) \Big] \bigg), \numberthis \label{eqn-del_t-m-l-T} \\[5pt]
    \del_t \big( \nabla^m \nablabar^l F \big) &= \sum_{i+j>0} \sum_{i=0}^m \sum_{j=0}^l (\nabla^{m-i} \nablabar^{l-j} F) * \big( \nabla^i \nablabar^j (\del_t H) \big) \\
    &\quad + \nabla^m \nablabar^l \bigg( \Big( \frac{1}{2 |\Omega|_\omega} \Big) \cdot \Big[ \frac{1}{2} \Delta_R F + \Rm * F + F * F + \nablabar (T * F) + \nabla (\overline{T} * F) \Big] \bigg). \numberthis \label{eqn-del_t-m-l-F}
\end{align*}

\section{Shi-Type Estimates}
\label{sect-Shi-ests}

We first state some assumptions that will be used throughout the rest of this paper. These will largely coincide with those from \cite{Sua25}, with slight changes in grouping terms and the addition of terms involving $F$.

We suppose for some $k \geq 1$ that there exist positive constants $B_{min}, B_{max}, C_{-1}, C_0, \ldots, C_{k-1}$ such that
\begin{gather}
    B_{min} \leq \Big( \frac{1}{2 |\Omega|_\omega} \Big) \leq B_{max}, \label{eqn-ass-1} \\[5pt]
    |T|, |\overline{T}| \leq C_{-1}, \qquad |\Rm|, |DT|, |D\overline{T}|, |F| \leq C_0, \label{eqn-ass-2} \\[5pt]
    |D^q \Rm|, |D^{q+1} T|, |D^{q+1} \overline{T}|, |D^q F| \leq C_q \text{ for } 1 \leq q \leq k-1, \label{eqn-ass-3}
\end{gather}
along the anomaly flow \eqref{eqn-anomaly-flow} on $t \in [0,\tau)$, where
\begin{align*}
    |D^q A|^2 = \sum_{m+l = q} |\nabla^m \nablabar^l A|^2.
\end{align*}

We will make heavy use of the test functions $G_q$ for $q \in \bb{N}_0$ given by
\begin{align*}
    G_q = |D^q \Rm|^2 + |D^{q+1} T|^2 + |D^q F|^2.
\end{align*}

\begin{rmk}
\label{rmk-unif-bdd}
    We note that the assumptions \eqref{eqn-ass-1} - \eqref{eqn-ass-2} tell us that the evolving metrics $g$ and $H$ are uniformly bounded above and below by the initial metrics. In turn, we get that the volume $\int_X 1\, \vol$ of $X$ and also that the Sobolev constants for $X$ are uniformly bounded.
\end{rmk}

\subsection{The \texorpdfstring{$k \geq 2$}{k >= 2} Case}
\label{subsect-k>=2}

\begin{propn}
\label{propn-G_k}
    Let $k \geq 2$, $p \geq 3$. Under the assumptions \eqref{eqn-ass-1} - \eqref{eqn-ass-3}, if
    \begin{align*}
        \alpha' < \frac{B_{min}}{8 a_0 B_{max} C_0 p},
    \end{align*}
    then
    \begin{align*}
        |G_k|_{L^p}, |D^k \Rm|_{L^{2p}}, |D^{k+1} T|_{L^{2p}} \text{ and } |D^k F|_{L^{2p}}
    \end{align*}
    are uniformly bounded along the flow.
\end{propn}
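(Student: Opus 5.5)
We will run an $L^p$ energy argument on the test function $G_k$, following the corresponding proposition in \cite{Sua25} and adding the terms coming from $F$. First we compute $\del_t G_k$ using \eqref{eqn-del_t-m-l-Rm}--\eqref{eqn-del_t-m-l-F}. We commute the derivatives $\nabla^m \nablabar^l$ past $\Delta_R$ with the Ricci identity, and we move the dilaton factor $\frac{1}{2|\Omega|_\omega}$ outside, using the identities $\nabla (\frac{1}{2|\Omega|_\omega}) = (\frac{1}{2|\Omega|_\omega}) T$ and its conjugate. This gives a schematic inequality
\begin{align*}
    \del_t G_k \leq \Big( \frac{1}{2|\Omega|_\omega} \Big) \Big[ \tfrac{1}{2} \Delta_R G_k - \tfrac12\big( |D^{k+1}\Rm|^2 + |D^{k+2}T|^2 + |D^{k+1}F|^2 \big) + \alpha' \, \mathcal{B} + \mathcal{L} \Big].
\end{align*}
Here $\mathcal{L}$ consists of terms that are at most quadratic in the top-order quantities $D^k\Rm$, $D^{k+1}T$, $D^kF$, or that contain one top-order factor times one factor at order $k+1$ paired against a bounded coefficient. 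All lower-order factors in $\mathcal{L}$ are controlled by $C_{-1},\dots,C_{k-1}$ through \eqref{eqn-ass-2}--\eqref{eqn-ass-3}; this is where $k\ge2$ is used, so that every factor other than the top-order ones is bounded pointwise. The bad terms $\mathcal{B}$ come from $\alpha'\nabla\nablabar(\Rm*\Rm + F*F)$ and $\alpha'\nabla(\Rm*\Rm+F*F)$. After differentiation they produce $\alpha'\,\Rm * D^{k+2}\Rm$-type contractions against $D^k\Rm$, and similarly for $T$ and $F$. These carry one more derivative than the good gradient term can absorb by Cauchy--Schwarz alone.

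Next we multiply by $G_k^{p-1}$, integrate over $X$, and integrate by parts. The volume form's time derivative is bounded by \eqref{eqn-del_t-g} and the assumptions, and Remark~\ref{rmk-unif-bdd} gives uniform Sobolev and volume control. The Laplacian term yields the good quantities
\begin{align*}
    -(p-1)\int B\, G_k^{p-2} |\nabla G_k|^2 \quad \text{and} \quad -\tfrac12 \int B\, G_k^{p-1} |D^{k+1}(\cdot)|^2 .
\end{align*}
For each bad term we move one derivative off $D^{k+2}\Rm$ by integration by parts. The derivative then falls either on $G_k^{p-1}$, producing $(p-1)\alpha' |\Rm|\, G_k^{p-2}|\nabla G_k|\, |D^{k+1}\Rm|\, |D^k\Rm|$, or on $\Rm$ and the dilaton factor, producing harmless terms. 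Using $|\Rm|,|F|\le C_0$, $|D^k\Rm|\le G_k^{1/2}$, and Young's inequality, this is bounded by
\begin{align*}
    a_0\alpha' C_0 B_{max}\, p \int G_k^{p-1}\big(|D^{k+1}\Rm|^2+|D^{k+1}F|^2\big) + a_0 \alpha' C_0 B_{max}\, p \int G_k^{p-2}|\nabla G_k|^2 .
\end{align*}
Here $a_0$ is the universal combinatorial constant counting the contractions. The hypothesis $\alpha' < B_{min}/(8a_0B_{max}C_0p)$ is chosen exactly so that these terms are at most a quarter of the good terms, which have coefficients of size at least $B_{min}$ and $B_{min}(p-1)$. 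The main obstacle is doing this bookkeeping carefully, and in particular checking that the $F*F$ and mixed $T$-terms introduce no worse dependence than $C_0 p$.

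After absorption we obtain
\begin{align*}
    \frac{d}{dt}\int G_k^p \le C p \int G_k^p + C\int G_k^{p-1} .
\end{align*}
The remaining cross terms, with one factor at order $k+1$ and bounded coefficients, are absorbed into the remaining good gradient terms by Young's inequality with $\eps$. Hölder's inequality and the bounded volume then give a Gronwall bound for $|G_k|_{L^p}$ on $[0,\tau)$, since $\tau<\infty$. Finally, $G_k \ge |D^k\Rm|^2$, and likewise for the $T$ and $F$ components, so $|D^k\Rm|_{L^{2p}}^2 \le |G_k|_{L^p}$. The same holds for $D^{k+1}T$ and $D^kF$, which gives the remaining bounds.
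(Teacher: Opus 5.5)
Your proposal is correct and follows essentially the same route as the paper: a pointwise estimate for $\del_t G_k$ with the $\alpha'$-terms isolated in divergence form, integration against $G_k^{p-1}$, integration by parts and Young's inequality, absorption under $\alpha' < B_{min}/(8a_0B_{max}C_0p)$, then Gr\"onwall. You also correctly identify that $k \geq 2$ is what rules out products of two top-order factors.

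One bookkeeping slip: when you move the derivative off $D^{k+2}\Rm$, it also lands on the test factor $D^k\Rm$ (resp.\ $D^{k+1}T$). This gives $\alpha'\,\Rm * D^{k+1}\Rm * D^{k+1}\Rm$-type terms, which are not harmless; they are the $8a_0B_{max}C_0\alpha'\,G_{k+1}$ term in the paper's pointwise estimate. They have the same form as your Young's-inequality output, however, and are absorbed by the same hypothesis.
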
 

\begin{proof}
    The proof largely mirrors that of the one in \cite{Sua25}. As such, we shall skip most of the computations and only show the major steps.

    One can check that when accounting for our choice of $\Phi = \tr (F \wedge F)$ that we get the following pointwise estimates for the evolution of $|D^k \Rm|^2$ and $|D^{k+1} T|^2$ for $0 < \eps < 1$:
    \begin{align*}
        \del_t |D^k \Rm|^2 &\leq \Big( \frac{1}{2 |\Omega|_\omega} \Big) \cdot \Big( \frac{1}{2} \Delta_R \big( |D^k \Rm|^2 \big) \Big) \\
        &\qquad + \sum_{m+l=k} 2 \alpha' \Re \Big( \nablabar_{\overline{j}} \Binner{\Big( \frac{1}{2 |\Omega|_\omega} \Big) \cdot \nabla^{m+1} \nablabar^l (\Rm * \Rm)}{\nabla^m \nablabar^l \Rm}^{\overline{j}} \Big) \\
        &\qquad + \sum_{m+l=k} 2 \alpha' \Re \Big( \nablabar_{\overline{j}} \Binner{\Big( \frac{1}{2 |\Omega|_\omega} \Big) \cdot \nabla^{m+1} \nablabar^l (F * F)}{\nabla^m \nablabar^l \Rm}^{\overline{j}} \Big) \\
        &\qquad + C \eps^{-1} \Big( 1 + |D^k \Rm|^2 + |D^{k+1} T|^2 + |D^k F|^2 \Big) \\
        &\qquad + \big[ C \eps + 6 a_0 B_{max} C_0 \alpha' - B_{min} \big] |D^{k+1} \Rm|^2 + C \eps |D^{k+2} T|^2 + \big[ C \eps + 2 a_0 B_{max} C_0 \alpha' \big] |D^{k+1} F|^2, \\[5pt]
        \del_t |D^{k+1} T|^2 &\leq \Big( \frac{1}{2 |\Omega|_\omega} \Big) \cdot \Big( \frac{1}{2} \Delta_R \big( |D^{k+1} \Rm|^2 \big) \Big) \\
        &\qquad + \sum_{m' + l' = k+1} 2 \alpha' \Re \Big( \nabla_i \Binner{\Big( \frac{1}{2 |\Omega|_\omega} \Big) \cdot \nabla^{m'} \nablabar^{l'+1} (\Rm * \Rm)}{\nabla^{m'} \nablabar^{l'} T}^i \Big) \\
        &\qquad + \sum_{m' + l' = k+1} 2 \alpha' \Re \Big( \nabla_i \Binner{\Big( \frac{1}{2 |\Omega|_\omega} \Big) \cdot \nabla^{m'} \nablabar^{l'+1} (F * F)}{\nabla^{m'} \nablabar^{l'} T}^i \Big) \\
        &\qquad + C \eps^{-1} \Big( 1 + |D^k \Rm|^2 + |D^{k+1} T|^2 + |D^k F|^2 \Big) \\
        &\qquad + \big[ C \eps + 2 a_0 B_{max} C_0 \alpha' \big] |D^{k+1} \Rm|^2 + \big[ C \eps + 4 a_0 B_{max} C_0 \alpha' - B_{min} \big] |D^{k+2} T|^2 \\
        &\qquad \qquad + \big[ C \eps + 2 a_0 B_{max} C_0 \alpha' \big] |D^{k+1} F|^2,
    \end{align*}
    where $a_0 \geq 1$ is a uniform constant independent of k arising from the linear combinations obscured by the $*$ contraction notation in \eqref{eqn-del_t-g} - \eqref{eqn-del_t-T}.

    A similar treatment for the evolution of $|D^k F|^2$ using the schematic expression \eqref{eqn-del_t-m-l-F} yields the estimate
    \begin{align*}
        \del_t |D^k F|^2 &\leq \Big( \frac{1}{2 |\Omega|_\omega} \Big) \cdot \Big( \frac{1}{2} \Delta_R \big( |D^k F|^2 \big) \Big) \\
        &\qquad + C \eps^{-1} \Big( 1 + |D^k \Rm|^2 + |D^{k+1} T|^2 + |D^k F|^2 \Big) \\
        &\qquad + \big[ C \eps - B_{min} \big] |D^{k+1} F|^2.
    \end{align*}

    Consider the test functions
    \begin{align*}
        G_q = |D^q \Rm|^2 + |D^{q+1} T|^2 + |D^q F|^2.
    \end{align*}

    The above pointwise estimates combine to give
    \begin{align*}
        \del_t G_k &\leq \Big( \frac{1}{2 |\Omega|_\omega} \Big) \cdot \Big( \frac{1}{2} \Delta_R \big( G_k \big) \Big) \\
        &\qquad + \sum_{m+l=k} 2 \alpha' \Re \Big( \nablabar_{\overline{j}} \Binner{\Big( \frac{1}{2 |\Omega|_\omega} \Big) \cdot \nabla^{m+1} \nablabar^l (\Rm * \Rm)}{\nabla^m \nablabar^l \Rm}^{\overline{j}} \Big) \\
        &\qquad + \sum_{m+l=k} 2 \alpha' \Re \Big( \nablabar_{\overline{j}} \Binner{\Big( \frac{1}{2 |\Omega|_\omega} \Big) \cdot \nabla^{m+1} \nablabar^l (F * F)}{\nabla^m \nablabar^l \Rm}^{\overline{j}} \Big) \\
        &\qquad + \sum_{m' + l' = k+1} 2 \alpha' \Re \Big( \nabla_i \Binner{\Big( \frac{1}{2 |\Omega|_\omega} \Big) \cdot \nabla^{m'} \nablabar^{l'+1} (\Rm * \Rm)}{\nabla^{m'} \nablabar^{l'} T}^i \Big) \\
        &\qquad + \sum_{m' + l' = k+1} 2 \alpha' \Re \Big( \nabla_i \Binner{\Big( \frac{1}{2 |\Omega|_\omega} \Big) \cdot \nabla^{m'} \nablabar^{l'+1} (F * F)}{\nabla^{m'} \nablabar^{l'} T}^i \Big) \\
        &\qquad + C \eps^{-1} \Big( 1 + G_k \Big) + \Big[ C \eps + 8 a_0 B_{max} C_0 \alpha' - B_{min} \Big] G_{k+1}.
    \end{align*}

    Since the volume of $X$ is uniformly bounded by Remark \ref{rmk-unif-bdd}, we see that for $p \geq 3$ that
    \begin{align*}
        \del_t \bigg( \int_X G_k^p\, \vol \bigg) &\leq \int_X p G_k^{p-1} \cdot \Big( \frac{1}{2 |\Omega|_\omega} \Big) \cdot \Big( \frac{1}{2} \Delta_R \big( G_k \big) \Big)\, \vol \\
        &\qquad + \sum_{m+l=k} 2 \alpha' \Re \bigg( \int_X p G_k^{p-1} \cdot \nablabar_{\overline{j}} \Binner{\Big( \frac{1}{2 |\Omega|_\omega} \Big) \cdot \nabla^{m+1} \nablabar^l (\Rm * \Rm)}{\nabla^m \nablabar^l \Rm}^{\overline{j}}\, \vol \bigg) \\
        &\qquad + \sum_{m+l=k} 2 \alpha' \Re \bigg( \int_X p G_k^{p-1} \cdot \nablabar_{\overline{j}} \Binner{\Big( \frac{1}{2 |\Omega|_\omega} \Big) \cdot \nabla^{m+1} \nablabar^l (F * F)}{\nabla^m \nablabar^l \Rm}^{\overline{j}}\, \vol \bigg) \\
        &\qquad + \sum_{m' + l' = k+1} 2 \alpha' \Re \bigg( \int_X p G_k^{p-1} \cdot \nabla_i \Binner{\Big( \frac{1}{2 |\Omega|_\omega} \Big) \cdot \nabla^{m'} \nablabar^{l'+1} (\Rm * \Rm)}{\nabla^{m'} \nablabar^{l'} T}^i\, \vol \bigg) \\
        &\qquad + \sum_{m' + l' = k+1} 2 \alpha' \Re \bigg( \int_X p G_k^{p-1} \cdot \nabla_i \Binner{\Big( \frac{1}{2 |\Omega|_\omega} \Big) \cdot \nabla^{m'} \nablabar^{l'+1} (F * F)}{\nabla^{m'} \nablabar^{l'} T}^i\, \vol \bigg) \\
        &\qquad + C \eps^{-1} \int_X G_k^{p-1} \cdot \big( 1 + G_k \big)\, \vol + \big[ C \eps + 8 a_0 B_{max} C_0 \alpha' - B_{min} \big] \int_X p G_k^{p-1} \cdot G_{k+1}\, \vol.
    \end{align*}

    From here, computations as in \cite{Sua25} then show that
    \begin{align*}
        \int_X p G_k^{p-1} \cdot \Big( \frac{1}{2 |\Omega|_\omega} \Big) \cdot \Big( \frac{1}{2} \Delta_R \big( G_k \big) \Big)\, \vol &\leq C \eps^{-1} \int_X G_k^p \, \vol + \big[ C \eps - B_{min} p (p-1) \big] \int_X G_k^{p-2} \cdot |\nabla G_k|^2 \, \vol,
    \end{align*}
    and also that each of the following four lines (that is, the ones involving summations) are bounded above by
    \begin{align*}
        &C \eps^{-1} \int_X \big( 1 + G_k^p \big) \, \vol + \big[ C \eps + 2 a_0 B_{max} C_0 \alpha' p (p-1) \big] \int_X G_k^{p-1} \cdot G_{k+1} \, \vol \\
        &\qquad + \big[ C \eps + 2 a_0 B_{max} C_0 \alpha' p (p-1) \big] \int_X G_k^{p-2} \cdot |\nablabar G_k|^2 \, \vol.
    \end{align*}

    Thus we have
    \begin{align*}
        \del_t \bigg( \int_X G_k^p \, \vol \bigg) &\leq C \eps^{-1} \int_X \big( 1 + G_k^p \big) \, \vol + \big[ C \eps + 8 a_0 B_{max} C_0 \alpha' p^2 - B_{min} p \big] \int_X G_k^{p-1} \cdot G_{k+1} \, \vol \\
        &\qquad + \big[ C \eps + 8 a_0 B_{max} C_0 \alpha' p (p-1) - B_{min} p (p-1) \big] \int_X G_k^{p-2} \cdot|\nablabar G_k|^2 \, \vol.
    \end{align*}

    Thus if
    \begin{align*}
        \alpha' < \frac{B_{min}}{8 a_0 B_{max} C_0 p},
    \end{align*}
    we can choose $\eps$ small enough such that
    \begin{align*}
        \del_t \bigg( \int_X G_k^p \, \vol \bigg) \leq C + C \int_X G_k^p \, \vol.
    \end{align*}

    By Gr\"{o}nwall's inequality, we conclude that there exists a positive constant $\Lambda_p = \Lambda_p(k, \alpha')$ such that
    \begin{align*}
        \int_X G_k^p(t) \, \vol(t) < \bigg( 1 + \int_X G_k^p(0) \, \vol(0) \bigg) e^{\Lambda_p \tau}
    \end{align*}
    for all $t \in [0,\tau)$, from which the result follows.
\end{proof}

\begin{rmk}
    We note that we chose $p \geq 3$ in the above proof to avoid potentially dividing by zero since the omitted steps using Young's inequality involves terms such as $G_k^{p-3}$. Uniform boundedness of the $L^p$- and $L^{2p}$-norms of the respective quantities involved for $1 \leq p < 3$ can ultimately be recovered via H\"{o}lder's inequality since $X$ is compact. 
\end{rmk}

We can also obtain a similar bound on $L^p$-norms of higher-order derivatives in a similar manner:

\begin{propn}
\label{propn-G_k+1}
    Let $k \geq 2$, $p \geq 3$. Under the assumptions \eqref{eqn-ass-1} - \eqref{eqn-ass-3}, if
    \begin{align*}
        \alpha' < \frac{B_{min}}{8 a_0 B_{max} C_0 p}, \qquad \bigg( \text{ or } \alpha' < \frac{B_{min}}{16 a_0 B_{max} C_0 p} \text{ instead if $k = 2$} \bigg)
    \end{align*}
    then
    \begin{align*}
        |G_{k+1}|_{L^p}, |D^{k+1} \Rm|_{L^{2p}}, |D^{k+2} T|_{L^{2p}} \text{ and } |D^{k+1} F|_{L^{2p}}
    \end{align*}
    are uniformly bounded along the flow.
\end{propn}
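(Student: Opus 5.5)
We run the same $L^p$ energy argument as in Proposition \ref{propn-G_k}, one derivative higher, now with test function $G_{k+1}$. The difference is that we only have pointwise bounds up to order $k-1$ from \eqref{eqn-ass-3}. At order $k$ we have only the $L^{2p}$ bounds on $D^k \Rm$, $D^{k+1} T$, $D^k F$ supplied by Proposition \ref{propn-G_k}.

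First, we redo the pointwise computations for $\del_t |D^{k+1}\Rm|^2$, $\del_t |D^{k+2}T|^2$ and $\del_t |D^{k+1}F|^2$ using \eqref{eqn-del_t-m-l-Rm}--\eqref{eqn-del_t-m-l-F}. The bad top-order $\alpha'$ terms are handled exactly as before: the pieces $\nabla \nablabar(\Rm*\Rm)$ and $\nabla\nablabar(F*F)$, with one factor carrying all derivatives and the other bounded by $C_0$, are rewritten as divergences plus terms of the form $[C\eps + c\,a_0 B_{max} C_0 \alpha']\,G_{k+2}$. The Laplacian supplies $-B_{min} G_{k+2}$. This gives
\begin{align*}
\del_t G_{k+1} &\leq \Big( \frac{1}{2|\Omega|_\omega}\Big)\cdot \frac12 \Delta_R G_{k+1} + (\text{divergence terms}) + C\eps^{-1}\big(1 + G_{k+1} + Q\big) \\
&\qquad + \big[C\eps + 8a_0B_{max}C_0\alpha' - B_{min}\big] G_{k+2}.
\end{align*}
Here $Q$ collects the genuinely new lower-order products. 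By Leibniz, these are terms in which two factors each carry roughly $k$ derivatives, for example $D^k\Rm * D^k\Rm * D^{k+1}\Rm$, or $D^{k}\Rm*D^{k}F$ multiplied against $D^{k+1}$ quantities. They are no longer controlled pointwise by $C_{k}$-type constants. After Young's inequality, $Q$ is bounded by a combination of $G_k^2$, $G_k\,G_{k+1}$, and $G_k \cdot(\text{top order})$.

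Next, multiply by $pG_{k+1}^{p-1}$ and integrate. The Laplacian and divergence terms are treated verbatim as in the proof of Proposition \ref{propn-G_k}. This produces the good term $-B_{min}p(p-1)\int G_{k+1}^{p-2}|\nabla G_{k+1}|^2$ and the coefficient condition $\alpha' < B_{min}/(8a_0B_{max}C_0p)$. The remaining task is to bound $\int G_{k+1}^{p-1} Q$.

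For this we use Hölder together with the uniform $L^{2p}$ (indeed, all $L^q$) bounds on $G_k$ from Proposition \ref{propn-G_k}. For instance,
\[
\int G_{k+1}^{p-1} G_k G_{k+1} \leq \|G_k\|_{L^{r}} \|G_{k+1}\|^{p}_{L^{ps}}, \qquad \frac1r+\frac1s=1.
\]
We then apply the Sobolev inequality to $G_{k+1}^{p/2}$; its constant is uniform by Remark \ref{rmk-unif-bdd}. Interpolation bounds $\|G_{k+1}\|^p_{L^{ps}}$ for $s$ close to $1$ by $\delta\int|\nabla G_{k+1}^{p/2}|^2 + C_\delta\int G_{k+1}^p$. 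The gradient part is absorbed by the good term. This closes to $\del_t\int G_{k+1}^p \le C + C\int G_{k+1}^p$, and Grönwall finishes the proof as before.

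The main obstacle is this absorption step when $k=2$. Then $k-1 = 1$, so pointwise control of first derivatives is all we have. Some products in the expansion of $\nabla^{3}(\Rm*\Rm)$ and $\nabla^3(F*F)$ that would normally be harmless instead pair the top-order term $D^{k+2}$ with $D^k$ factors that are controlled only in $L^{2p}$. Their contribution cannot be put into $Q$. It must instead be absorbed into the $G_{k+2}$ and gradient terms using a further $\alpha'$-weighted Young's inequality, which duplicates the $8a_0B_{max}C_0\alpha'$ coefficient. This explains the stronger hypothesis $\alpha' < B_{min}/(16a_0B_{max}C_0p)$ when $k=2$. We would verify this carefully by counting, in each Leibniz term, how many factors exceed order $k-1$.
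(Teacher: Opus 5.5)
There is a genuine gap, and it sits exactly where the $k=2$ hypothesis comes from.

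First, Proposition~\ref{propn-G_k} does not give ``all $L^q$'' bounds on $G_k$, nor an $L^{2p}$ bound on $G_k$. Its smallness condition depends on the exponent. Under $\alpha' < B_{min}/(8a_0B_{max}C_0p)$ it yields only $\|G_k\|_{L^p}$, which is the same information as the $L^{2p}$ bounds on $D^k\Rm$, $D^{k+1}T$, $D^kF$; you have conflated the two. So your H\"older step uses integrability you do not have.

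For $k\ge 3$ this does no damage, but only because the quadratic terms you list do not arise. After the integration by parts that produces the divergence terms, the products to control are $D^a\Rm * D^{k+2-a}\Rm$ paired against $D^{k+2}\Rm$. The same holds for $F$, for $D^{k+2}(T*\Rm*\Rm)$ coming from $H_2$, and for the $\del_t g$ terms. Having two factors of order $\ge k$, i.e.\ outside the pointwise control of \eqref{eqn-ass-3}, forces $2k\le k+2$. Hence for $k\ge 3$ the pointwise estimate carries only $C\eps^{-1}(1+G_k+G_{k+1})$. Young's inequality plus Proposition~\ref{propn-G_k} at the same exponent $p$ then closes it, and no Sobolev interpolation is needed.

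For $k=2$ your proposed mechanism cannot work. Consider terms such as $\alpha'|D^2\Rm|^2|D^4\Rm|$, $|D^2\Rm|^2|D^3\Rm|$, or $|D^2\Rm|^2|D^4T|$. They cannot be absorbed into $G_4$ with a coefficient of the form $a_0B_{max}C_0\alpha'$, because $|D^2\Rm|$ has no pointwise bound at all, certainly not by $C_0$. The coefficient of $G_4$ therefore remains $8a_0B_{max}C_0\alpha'$.

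What actually happens is that Young's inequality splits these terms as $\eps G_4 + C\eps^{-1}G_2^2$. The pointwise estimate then carries $C\eps^{-1}(1+G_2^2+G_3)$. Integrating against $pG_3^{p-1}$ gives $\del_t\int_X G_3^p \le C + C\int_X G_2^{2p} + C\int_X G_3^p$, under the usual condition $\alpha'<B_{min}/(8a_0B_{max}C_0p)$.

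The missing idea is that bounding $\int_X G_2^{2p}$ requires Proposition~\ref{propn-G_k} with $2p$ in place of $p$. Its hypothesis is then exactly $\alpha' < B_{min}/(16a_0B_{max}C_0p)$, and that is where the $16$ comes from. A H\"older--Sobolev argument in real dimension $6$ could in principle lower the integrability needed on $G_2$. However, it would have to be run with exponents that Proposition~\ref{propn-G_k} actually supplies, which your proposal does not do.
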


\begin{proof}
    The proof is analogous to the one from before, and is based on the pointwise estimate
    \begin{align*}
        \del_t G_{k+1} &\leq \Big( \frac{1}{2 |\Omega|_\omega} \Big) \cdot \Big( \frac{1}{2} \Delta_R \big( G_{k+1} \big) \Big) \\
        &\qquad + \sum_{m+l=k+1} 2 \alpha' \Re \Big( \nablabar_{\overline{j}} \Binner{\Big( \frac{1}{2 |\Omega|_\omega} \Big) \cdot \nabla^{m+1} \nablabar^l (\Rm * \Rm)}{\nabla^m \nablabar^l \Rm}^{\overline{j}} \Big) \\
        &\qquad + \sum_{m+l=k+1} 2 \alpha' \Re \Big( \nablabar_{\overline{j}} \Binner{\Big( \frac{1}{2 |\Omega|_\omega} \Big) \cdot \nabla^{m+1} \nablabar^l (F * F)}{\nabla^m \nablabar^l \Rm}^{\overline{j}} \Big) \\
        &\qquad + \sum_{m' + l' = k+2} 2 \alpha' \Re \Big( \nabla_i \Binner{\Big( \frac{1}{2 |\Omega|_\omega} \Big) \cdot \nabla^{m'} \nablabar^{l'+1} (\Rm * \Rm)}{\nabla^{m'} \nablabar^{l'} T}^i \Big) \\
        &\qquad + \sum_{m' + l' = k+2} 2 \alpha' \Re \Big( \nabla_i \Binner{\Big( \frac{1}{2 |\Omega|_\omega} \Big) \cdot \nabla^{m'} \nablabar^{l'+1} (F * F)}{\nabla^{m'} \nablabar^{l'} T}^i \Big) \\
        &\qquad + C \eps^{-1} \Big( 1 + G_k + G_{k+1} \Big) + \Big[ C \eps + 8 a_0 B_{max} C_0 \alpha' - B_{min} \Big] G_{k+2}
    \end{align*}
    when $k \geq 3$. Proceeding from here (using $L^p$-bounds followed by Gr\"{o}nwall's inequality) yields 
    \begin{align*}
        \del_t \bigg( \int_X G_{k+1}^p \, \vol \bigg) \leq C + C \int_X G_k^p \, \vol + C \int_X G_{k+1}^p \, \vol
    \end{align*}
    when
    \begin{align*}
        \alpha' < \frac{B_{min}}{8 a_0 B_{max} C_0 p}.
    \end{align*}
    The result follows as long as $\int_X G_k^p \, \vol$ is uniformly bounded, which holds by Proposition \ref{propn-G_k} under the same assumption on $\alpha'$.

    When $k = 2$, the estimate instead becomes
    \begin{align*}
        \del_t G_{3} &\leq \Big( \frac{1}{2 |\Omega|_\omega} \Big) \cdot \Big( \frac{1}{2} \Delta_R \big( G_{3} \big) \Big) \\
        &\qquad + \sum_{m+l=3} 2 \alpha' \Re \Big( \nablabar_{\overline{j}} \Binner{\Big( \frac{1}{2 |\Omega|_\omega} \Big) \cdot \nabla^{m+1} \nablabar^l (\Rm * \Rm)}{\nabla^m \nablabar^l \Rm}^{\overline{j}} \Big) \\
        &\qquad + \sum_{m+l=3} 2 \alpha' \Re \Big( \nablabar_{\overline{j}} \Binner{\Big( \frac{1}{2 |\Omega|_\omega} \Big) \cdot \nabla^{m+1} \nablabar^l (F * F)}{\nabla^m \nablabar^l \Rm}^{\overline{j}} \Big) \\
        &\qquad + \sum_{m' + l' = 4} 2 \alpha' \Re \Big( \nabla_i \Binner{\Big( \frac{1}{2 |\Omega|_\omega} \Big) \cdot \nabla^{m'} \nablabar^{l'+1} (\Rm * \Rm)}{\nabla^{m'} \nablabar^{l'} T}^i \Big) \\
        &\qquad + \sum_{m' + l' = 4} 2 \alpha' \Re \Big( \nabla_i \Binner{\Big( \frac{1}{2 |\Omega|_\omega} \Big) \cdot \nabla^{m'} \nablabar^{l'+1} (F * F)}{\nabla^{m'} \nablabar^{l'} T}^i \Big) \\
        &\qquad + C \eps^{-1} \Big( 1 + G_2^2 + G_{3} \Big) + \Big[ C \eps + 8 a_0 B_{max} C_0 \alpha' - B_{min} \Big] G_{4}.
    \end{align*}
    In this case we eventually get
    \begin{align*}
        \del_t \bigg( \int_X G_3^p \, \vol \bigg) \leq C + C \int_X G_2^{2p} \, \vol + C \int_X G_3^p \, \vol,
    \end{align*}
    when
    \begin{align*}
        \alpha' < \frac{B_{min}}{8 a_0 B_{max} C_0 p}.
    \end{align*}
    Since we also require that $\int_X G_2^{2p} \, \vol$ be bounded we require the stronger bound (replacing $p$ with $2p$) from Proposition \ref{propn-G_k}.
\end{proof}

\subsection{The \texorpdfstring{$k = 1$}{k = 1} Case and a Revised Smallness Condition}
\label{subsect-k=1}

We now prove the base $k = 1$ case and obtain a modified bound which agrees with the $k \geq 2$ case in terms of physical units. In particular, we obtain a bound of the form
\begin{align*}
    \alpha' \cdot |\Rm| < \Pi
\end{align*}
for some dimensionless constant $\Pi$, which is in line with the evolution equation
\begin{align*}
    \del_t g_{\overline{p} q} = \Big( \frac{1}{2 |\Omega|_\omega} \Big) \Big[ - \wtilde{R}_{\overline{p} q} + g^{\alpha \overline{\beta}} g^{s \overline{r}} T_{\overline{\beta} s q} \overline{T}_{\alpha \overline{r} \overline{p}} - \alpha' g^{s \overline{r}} \big( \tensor{R}{_[_{\overline{p}}_s^a_b} \tensor{R}{_{\overline{r}}_q_]^b_a} - \tensor{F}{_[_{\overline{p}}_s^\alpha_\beta} \tensor{F}{_{\overline{r}}_q_]^\beta_\alpha} \big) \Big]
\end{align*}
from \cite{PPZ18b}.

As before, the main idea is the same as that of \cite{Sua25}, however we use Young's inequality to separate terms in a slightly different fashion, keeping powers of $\alpha'$ and $C_0$ ``balanced".

\begin{propn}
\label{propn-G_k-k=1}
    Let $k = 1$ and $p \geq 3$. Under the assumptions \eqref{eqn-ass-1} - \eqref{eqn-ass-3}, there exists a positive constant $M$ such that if
    \begin{align*}
        \alpha' < M \cdot \frac{B_{min}^2}{a_0 B_{max}^2 C_0 p},
    \end{align*}
    then
    \begin{align*}
        |D \Rm|_{L^{2p}}, |D^2 T|_{L^{2p}} \text{ and } |D F|_{L^{2p}}
    \end{align*}
    are uniformly bounded along the flow.
\end{propn}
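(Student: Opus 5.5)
We follow the scheme of Proposition \ref{propn-G_k}, applied to $G_1 = |D\Rm|^2 + |D^2 T|^2 + |DF|^2$. The difference is that when $k=1$ the "lower-order" terms are no longer controlled by \eqref{eqn-ass-3}, which is vacuous. In the evolution of $\nabla^m\nablabar^l \Rm$ with $m+l=1$, the $\alpha'$-terms contain $\nabla^{m+1}\nablabar^{l}\nabla\nablabar(\Rm*\Rm)$ and the analogous $F*F$ terms. After peeling off the top-order piece $\Rm * D^{k+2}\Rm$, which is handled by integration by parts exactly as in the displayed divergence terms, one is left with quadratic terms $D\Rm * D^2\Rm$ and $DF*D^2F$. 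These cannot be bounded pointwise by $C_0$ times a single top-order quantity. The same happens in the $T$ equation, with terms $D\Rm * D\Rm$ paired against $D^2T$, and in the $F$ equation, with $DF * DF$ and $D\Rm * DF$ terms.

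First, we derive the pointwise inequality for $\del_t G_1$ as in Proposition \ref{propn-G_k}: keep the divergence terms, collect everything controlled by $C_{-1},C_0,B_{max}$, and isolate the genuinely new terms of the form
\begin{align*}
    \alpha' B_{max}\, |D\Rm|\,|D^2\Rm|\,|D\Rm| \lesssim \alpha' B_{max}\, G_1^{1/2}\, G_2^{1/2}\, G_1^{1/2}.
\end{align*}
Second, we integrate against $pG_1^{p-1}$ and treat the Laplacian and divergence terms as before. This produces good terms $-B_{min}p\int G_1^{p-1}G_2$ and $-B_{min}p(p-1)\int G_1^{p-2}|\nablabar G_1|^2$, together with $\alpha' a_0 B_{max}C_0 p^2$ multiples of the same integrals.

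The key new step is the cubic term $\alpha' a_0 B_{max} p\int G_1^{p}\,G_2^{1/2}$. Here we use Young's inequality in the "balanced" form. We split it as
\begin{align*}
    \big(\delta\, G_1^{(p-1)/2}G_2^{1/2}\big)\cdot\big(\delta^{-1}\alpha' a_0 B_{max}\,G_1^{(p+1)/2}\big),
\end{align*}
with $\delta^2 \sim B_{min}$, so that the first factor squared is absorbed by $B_{min}p\int G_1^{p-1}G_2$. The second factor squared gives $(\alpha')^2 a_0^2 B_{max}^2 B_{min}^{-1} p\int G_1^{p+1}$. To close the estimate, we must control $\int G_1^{p+1}$ by the good gradient term. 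We write $G_1 \lesssim |D\Rm|^2+\dots$ and integrate by parts once, $\int G_1^{p}|D\Rm|^2 = -\int \Rm * D(G_1^{p}D\Rm)$, which yields
\begin{align*}
    \int G_1^{p+1} \leq C_0\, C\, p \Big(\int G_1^{p-1}G_2\Big)^{1/2}\Big(\int G_1^{p+1}\Big)^{1/2} + C_0 p\!\int G_1^{p-1/2}|\nabla G_1|\cdots .
\end{align*}
From this we get $\int G_1^{p+1} \lesssim C_0^2 p^2 \big(\int G_1^{p-1}G_2 + \int G_1^{p-2}|\nabla G_1|^2\big)$. Substituting back, the bad coefficient becomes $(\alpha')^2 a_0^2 B_{max}^2 C_0^2 B_{min}^{-1} p^3$, which must be dominated by $B_{min} p$. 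This is exactly the condition $\alpha' a_0 B_{max} C_0 p < M B_{min}^2/B_{max}$ up to constants, consistent with the claimed form $\alpha' < M B_{min}^2/(a_0B_{max}^2C_0p)$ once the factors of $B_{max}$ are tracked through the divergence term involving $(2|\Omega|_\omega)^{-1}$. The linear $\alpha' a_0B_{max}C_0p^2$ terms are dominated under the same condition, since $B_{min}/B_{max}\le 1$.

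Finally, for $\eps$ small we obtain $\del_t\int G_1^p \le C + C\int G_1^p$. Gr\"onwall's inequality then bounds $\int G_1^p$ uniformly on $[0,\tau)$, which gives the $L^{2p}$ bounds on $D\Rm$, $D^2T$, $DF$. The main obstacle is the interpolation step bounding $\int G_1^{p+1}$: the integration by parts must keep the $p$-dependence and the constants $C_0$, $B_{min}$ explicit, so that the final condition is linear in $\alpha' C_0$ with the stated powers of $B_{min}, B_{max}$ rather than the unit-mismatched form from \cite{Sua25}.
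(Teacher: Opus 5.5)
Your argument is correct in outline, but it takes a genuinely different route from the paper.

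\textbf{The paper's route.} The paper never estimates $\int_X G_1^p$ directly. It applies Young's inequality pointwise to the cubic terms $\alpha'|D\Rm|^2|D^2\Rm|$ (and their $T$, $F$ analogues), which produces the quartic term $10a_0B_{max}C_0^{-1}\alpha' G_1^2$ in $\del_t G_1$. It then absorbs that term \emph{pointwise} by passing to the Shi-type test function $G=[(\alpha')^2G_0+\mu]G_1$. The evolution of $G$ contains $-B_{min}(\alpha')^2G_1^2$, coming from the good term $-B_{min}G_1$ in $\del_t G_0$. The price is threefold:
\begin{itemize}
\item the cross term $(\alpha')^2\inner{\nabla G_0}{\nabla G_1}$;
\item the extra $\mathbf{(A)}$-terms;
\item balancing $\mu\sim B_{min}C_0\alpha'/(a_0B_{max})$ through the system \eqref{eqn-system-1}--\eqref{eqn-system-3}, which is where the higher power of $B_{min}/B_{max}$ in the hypothesis comes from.
\end{itemize}

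\textbf{Your route.} You keep $G_1^p$ and exploit the integral setting through a Hamilton-type interpolation inequality
\[
\int_X G_1^{p+1}\,\vol \leq K C_0^2\Big(\int_X G_1^{p-1}G_2\,\vol + p^2\int_X G_1^{p-2}|\nabla G_1|^2\,\vol\Big) + C\int_X G_1^p\,\vol .
\]
You obtain it by moving one derivative from $D\Rm$, $D^2T$, $DF$ onto $\Rm$, $DT$, $F$, all of which are bounded by $C_0$. The torsion corrections in the divergence theorem on a non-K\"ahler $X$ only produce the last term. This is shorter and needs no auxiliary parameter. It also only requires $\alpha'\lesssim B_{min}/(a_0B_{max}C_0p)$, which the stated hypothesis implies because $B_{min}\le B_{max}$. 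So calling this ``exactly'' the paper's condition is an overstatement, though in your favour.

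\textbf{Trade-off.} What the paper's route buys is that the absorption happens inside the pointwise differential inequality, in the spirit of Shi's estimates. The same template is then reused with $G'=[(\alpha')^2G_0+\mu']G_2$ in the next proposition. Your interpolation, by contrast, is intrinsically an $L^p$ device.

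\textbf{Gaps to fill when writing it out.} Make explicit the lower-order cubic terms carrying a torsion factor. For example, $\alpha'\,T*D\Rm*D\Rm$ paired with $D\Rm$ gives $\alpha'\int_X G_1^{p+1/2}\,\vol$. These are handled by $\eps$-Young together with the same interpolation. Your sketch silently lumps them in with the ``controlled'' terms, but they are not linear in $G_1$.
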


\begin{proof}
    One can again check that we have the pointwise estimates
    \begin{align*}
        \del_t G_0 &\leq \Big( \frac{1}{2 |\Omega|_\omega} \Big) \cdot \Big( \frac{1}{2} \Delta_R \big( G_0 \big) \Big) \\
        &\qquad + 2 \alpha' \Re \Big( \nablabar_{\overline{j}} \Binner{\Big( \frac{1}{2 |\Omega|_\omega} \Big) \nabla (\Rm * \Rm)}{\Rm}^{\overline{j}} \Big) \\
        &\qquad + 2 \alpha' \Re \Big( \nablabar_{\overline{j}} \Binner{\Big( \frac{1}{2 |\Omega|_\omega} \Big) \nabla (F * F)}{\Rm}^{\overline{j}} \Big) \\
        &\qquad + \sum_{m' + l' = 1} 2 \alpha' \Re \Big( \nabla_i \Binner{\Big( \frac{1}{2 |\Omega|_\omega} \nabla^{m'} \nablabar^{l'} (\Rm * \Rm)}{\nabla^{m'} \nablabar^{l'} T}^i \Big) \\
        &\qquad + \sum_{m' + l' = 1} 2 \alpha' \Re \Big( \nabla_i \Binner{\Big( \frac{1}{2 |\Omega|_\omega} \nabla^{m'} \nablabar^{l'} (F * F)}{\nabla^{m'} \nablabar^{l'} T}^i \Big) \\
        &\qquad + C \eps^{-1} 1 + \big[ C \eps + 8 a_0 B_{max} C_0 \alpha' - B_{min} \big] G_1,
    \end{align*}
    and
    \begin{align*}
        \del_t G_1 &\leq \Big( \frac{1}{2 |\Omega|_\omega} \Big) \cdot \Big( \frac{1}{2} \Delta_R \big( G_1 \big) \Big) \\
        &\qquad + \sum_{m+l = 1} 2 \alpha' \Re \Big( \nablabar_{\overline{j}} \Binner{\Big( \frac{1}{2 |\Omega|_\omega} \Big) \nabla^{m+1} \nablabar^l (\Rm * \Rm)}{\nabla^m \nablabar^l \Rm}^{\overline{j}} \Big) \\
        &\qquad + \sum_{m+l = 1} 2 \alpha' \Re \Big( \nablabar_{\overline{j}} \Binner{\Big( \frac{1}{2 |\Omega|_\omega} \Big) \nabla^{m+1} \nablabar^l (F * F)}{\nabla^m \nablabar^l \Rm}^{\overline{j}} \Big) \\
        &\qquad + \sum_{m' + l' = 2} 2 \alpha' \Re \Big( \nabla_i \Binner{\Big( \frac{1}{2 |\Omega|_\omega} \nabla^{m'} \nablabar^{l'} (\Rm * \Rm)}{\nabla^{m'} \nablabar^{l'} T}^i \Big) \\
        &\qquad + \sum_{m' + l' = 2} 2 \alpha' \Re \Big( \nabla_i \Binner{\Big( \frac{1}{2 |\Omega|_\omega} \nabla^{m'} \nablabar^{l'} (F * F)}{\nabla^{m'} \nablabar^{l'} T}^i \Big) \\
        &\qquad + C \eps^{-1} \big( 1 + G_1 \big) + \big[ C \eps + 10 a_0 B_{max} C_0^{-1} \alpha' \big] G_1^2 + \big[ C \eps + 12 a_0 B_{max} C_0 \alpha' - B_{min} \big] G_2.
    \end{align*}

    Here, the coefficients for $G_1^2$ and $G_2$ were obtained using Young's inequality on the highest-order terms appearing in the computation for $\del_t G_1$ (\cf the computation for Term $\mathbf{(VIII)}$ in \cite{Sua25}):
    {\allowdisplaybreaks
    \begin{align*}
        &\sum_{m+l=1} 2 \alpha' \Big( \frac{1}{2 |\Omega|_\omega} \Big) \cdot |\nabla^{m+1} \nablabar^l (\Rm * \Rm)| \cdot |\nabla^{m+1} \nablabar^l \Rm| \\
        &\qquad + \sum_{m+l=1} 2 \alpha' \Big( \frac{1}{2 |\Omega|_\omega} \Big) \cdot |\nabla^{m+1} \nablabar^l (F * F)| \cdot |\nabla^{m+1} \nablabar^l \Rm| \\
        &\qquad + \sum_{m'+l'=2} 2 \alpha' \Big( \frac{1}{2 |\Omega|_\omega} \Big) \cdot |\nabla^{m'} \nablabar^{l'} (\Rm * \Rm)| \cdot |\nabla^{m'} \nablabar^{l'+1} T| \\
        &\qquad + \sum_{m'+l'=2} 2 \alpha' \Big( \frac{1}{2 |\Omega|_\omega} \Big) \cdot |\nabla^{m'} \nablabar^{l} (F * F)| \cdot |\nabla^{m'} \nablabar^{l'+1} T| \\
        &= \sum_{m+l=1} \Big( 4 a_0 B_{max} C_0 \alpha' |\nabla^{m+1} \nablabar^l \Rm| \cdot |\nabla^{m+1} \nablabar^l \Rm| + 4 a_0 B_{max} \alpha' |D \Rm|^2 \cdot |\nabla^{m+1} \nablabar^l \Rm| \Big) \\
        &\qquad + \sum_{m+l=1} \Big( 4 a_0 B_{max} C_0 \alpha' |\nabla^{m+1} \nablabar^l F| \cdot |\nabla^{m+1} \nablabar^l \Rm| + 4 a_0 B_{max} \alpha' |D F|^2 \cdot |\nabla^{m+1} \nablabar^l \Rm| \Big) \\
        &\qquad + \sum_{m'+l'=2} \Big( 4 a_0 B_{max} C_0 \alpha' |\nabla^{m'} \nablabar^{l'} \Rm| \cdot |\nabla^{m'} \nablabar^{l'+1} T| + 4 a_0 B_{max} \alpha' |D \Rm|^2 \cdot |\nabla^{m'} \nablabar^{l'+1} T| \Big) \\
        &\qquad + \sum_{m'+l'=2} \Big( 4 a_0 B_{max} C_0 \alpha' |\nabla^{m'} \nablabar^{l'} F| \cdot |\nabla^{m'} \nablabar^{l'+1} T| + 4 a_0 B_{max} \alpha' |D F|^2 \cdot |\nabla^{m'} \nablabar^{l'+1} T| \Big) \\
        &\leq 4 a_0 B_{max} C_0 \alpha' |D^2 \Rm|^2 + 4 a_0 B_{max} C_0^{-1} \alpha' |D \Rm|^4 + 2 a_0 B_{max} C_0 \alpha' |D^2 \Rm|^2 \\*
        &\qquad + 2 a_0 B_{max} C_0 \alpha' |D^2 F|^2 + 2 a_0 B_{max} C_0 \alpha' |D^2 \Rm|^2 + 4 a_0 B_{max} C_0^{-1} \alpha' |DF|^4 + 2 a_0 B_{max} C_0 \alpha' |D^2 \Rm|^2 \\
        &\qquad + 2 a_0 B_{max} C_0 \alpha' |D^2 \Rm|^2 + 2 a_0 B_{max} C_0 \alpha' |D^3 T|^2 + 6 a_0 B_{max} C_0^{-1} \alpha' |D \Rm|^4 + 2 a_0 B_{max} C_0 \alpha' |D^3 T|^2 \\
        &\qquad + 2 a_0 B_{max} C_0 \alpha' |D^2 F|^2 + 2 a_0 B_{max} C_0 \alpha' |D^3 T|^2 + 6 a_0 B_{max} C_0^{-1} \alpha' |D F|^4 + 2 a_0 B_{max} C_0 \alpha' |D^3 T|^2 \\
        &\leq 10 a_0 B_{max} C_0^{-1} \alpha' G_1^2 + 12 a_0 B_{max} C_0 \alpha' G_2.
    \end{align*}}
    
    Consider the test function
    \begin{align*}
        G = \big[ (\alpha')^2 G_0 + \mu \big] \cdot G_1
    \end{align*}
    for some positive dimensionless constant $\mu$ to be determined later. We note that this test function is consistent with the idea that $(\alpha')^2 G_0$ should be dimensionless.

    Using the above and the fact that $G_0 = |\Rm|^2 + |DT|^2 + |F|^2 \leq 3 C_0$ by \eqref{eqn-ass-2}, we see that
    \begin{align*}
        \del_t G &\leq \Big( \frac{1}{2 |\Omega|_\omega} \Big) \cdot \Big( \frac{1}{2} \Delta_R \big( G \big) \Big) - \Big( \frac{1}{2 |\Omega|_\omega} \Big) \cdot (\alpha')^2 \Re \inner{\nabla G_0}{\nabla G_1} \\
        &\qquad + C \eps^{-1} \big( 1 + G \big) + 2 \Re \mathbf{(A)} + 2 \Re \mathbf{(B)} \\
        &\qquad + \big[ C \eps + 38 a_0 B_{max} C_0 (\alpha')^3 + 10 a_0 B_{max} C_0^{-1} \alpha' \mu - B_{min} (\alpha')^2 \big] G_1^2 \\
        &\qquad + \big[ C \eps + 36 a_0 B_{max} C_0^3 (\alpha')^3 + 12 a_0 B_{max} C_0 \alpha' \mu - B_{min} \mu \big] G_2
    \end{align*}
    where
    \begin{align*}
        \mathbf{(A)} &= (\alpha')^3 G_1 \cdot \Big( \nablabar_{\overline{j}} \Binner{\Big( \frac{1}{2 |\Omega|_\omega} \Big) \nabla (\Rm * \Rm)}{\Rm}^{\overline{j}} \Big) \\
        &\qquad + (\alpha')^3 G_1 \cdot \Big( \nablabar_{\overline{j}} \Binner{\Big( \frac{1}{2 |\Omega|_\omega} \Big) \nabla (F * F)}{\Rm}^{\overline{j}} \Big) \\
        &\qquad + \sum_{m' + l' = 1} (\alpha')^3 G_1 \cdot \Big( \nabla_i \Binner{\Big( \frac{1}{2 |\Omega|_\omega} \nabla^{m'} \nablabar^{l'} (\Rm * \Rm)}{\nabla^{m'} \nablabar^{l'} T}^i \Big) \\
        &\qquad + \sum_{m' + l' = 1} (\alpha')^3 G_1 \cdot \Big( \nabla_i \Binner{\Big( \frac{1}{2 |\Omega|_\omega} \nabla^{m'} \nablabar^{l'} (F * F)}{\nabla^{m'} \nablabar^{l'} T}^i \Big),
    \end{align*}
    and
    \begin{align*}
        \mathbf{(B)} &= \sum_{m+l = 1} \alpha' \big[ (\alpha')^2 G_0 + \mu \big] \cdot \Big( \nablabar_{\overline{j}} \Binner{\Big( \frac{1}{2 |\Omega|_\omega} \Big) \nabla^{m+1} \nablabar^l (\Rm * \Rm)}{\nabla^m \nablabar^l \Rm}^{\overline{j}} \Big) \\
        &\qquad + \sum_{m+l = 1} \alpha' \big[ (\alpha')^2 G_0 + \mu \big] \cdot\Big( \nablabar_{\overline{j}} \Binner{\Big( \frac{1}{2 |\Omega|_\omega} \Big) \nabla^{m+1} \nablabar^l (F * F)}{\nabla^m \nablabar^l \Rm}^{\overline{j}} \Big) \\
        &\qquad + \sum_{m' + l' = 2} \alpha' \big[ (\alpha')^2 G_0 + \mu \big] \cdot\Big( \nabla_i \Binner{\Big( \frac{1}{2 |\Omega|_\omega} \nabla^{m'} \nablabar^{l'} (\Rm * \Rm)}{\nabla^{m'} \nablabar^{l'} T}^i \Big) \\
        &\qquad + \sum_{m' + l' = 2} \alpha' \big[ (\alpha')^2 G_0 + \mu \big] \cdot\Big( \nabla_i \Binner{\Big( \frac{1}{2 |\Omega|_\omega} \nabla^{m'} \nablabar^{l'} (F * F)}{\nabla^{m'} \nablabar^{l'} T}^i \Big).
    \end{align*}

    For $p \geq 3$, the $L^p$-version of this estimate becomes
    \begin{align*}
        \del_t \bigg( \int_X G^p \, \vol \bigg) &\leq \int_X p G^{p-1} \cdot \Big( \frac{1}{2 |\Omega|_\omega} \Big) \cdot \Big( \frac{1}{2} \Delta_R \big( G \big) \Big) \, \vol \\
        &\qquad + (\alpha')^2 \Re \bigg( \int_X p G^{p-1} \cdot \Big( \frac{1}{2 |\Omega|_\omega} \Big) \cdot \inner{\nabla G_0}{\nabla G_1} \, \vol \bigg) + C \eps^{-1} \int_X G^{p-1} \cdot \big( 1 + G \big) \, \vol \\
        &\qquad + 2 \Re \bigg( \int_X p G^{p-1} \cdot \mathbf{(A)} \, \vol \bigg)  + 2 \Re \bigg( \int_X p G^{p-1} \cdot \mathbf{(B)} \, \vol \bigg) \\
        &\qquad + \big[ C \eps + 38 a_0 B_{max} C_0 (\alpha')^3 + 10 a_0 B_{max} C_0^{-1} \alpha' \mu - B_{min} (\alpha')^2 \big] \int_X p G^{p-1} \cdot G_1^2 \, \vol \\
        &\qquad + \big[ C \eps + 36 a_0 B_{max} C_0^3 (\alpha')^3 + 12 a_0 B_{max} C_0 \alpha' \mu - B_{min} \mu \big] \int_X p G^{p-1} \cdot G_2 \, \vol.
    \end{align*}

    The Laplacian term is bounded above by
    \begin{align*}
        \int_X p G^{p-1} \cdot \Big( \frac{1}{2 |\Omega|_\omega} \Big) \cdot \Big( \frac{1}{2} \Delta_R \big( G \big) \Big) \, \vol \leq C \eps^{-1} \int_X G^p \, \vol + \big[ C \eps - B_{min} p (p-1) \big] \int_X G^{p-2} \cdot |\nabla G|^2 \, \vol.
    \end{align*}
    
    For the inner product term, we recall that
    \begin{align*}
        \big| \nabla |D\Rm|^2 \big| &\leq |\nabla \inner{\nabla \Rm}{\nabla \Rm}| + |\nabla \inner{\nablabar \Rm}{\nablabar \Rm}| \\
        &\leq |\nabla \Rm| \cdot |\nabla^2 \Rm| + |\nabla \Rm| \cdot |\nablabar \nabla \Rm| + |\nablabar \Rm| \cdot |\nabla \nablabar \Rm| + |\nablabar \Rm| \cdot |\nablabar^2 \Rm| \\
        &\leq C |\Rm| \cdot |D\Rm| + 4 |D \Rm| \cdot |D^2 \Rm|.
    \end{align*}

    Similar computations for $\big|\nabla |D^q \Rm|^2 \big|$, $\big|\nabla |D^{q+1} T|^2 \big|$, and $\big|\nabla |D^q F|^2 \big|$ show that
    \begin{align*}
        |\nabla G_0| &\leq Q_0 + 2 |\Rm| \cdot |D \Rm| + 4 |D T| \cdot |D^2 T| + 2 |F| \cdot |DF| \\[5pt]
        |\nabla G_1| &\leq Q_1 + 4 |D \Rm| \cdot |D^2 \Rm| + 6 |D^2 T| \cdot |D^3 T| + 4 |D F| \cdot |D^2 F|
    \end{align*}
    where the $Q_i$ are generic ``good" terms without too many derivatives which arise from commuting derivatives using the Ricci identity. In the following computations, the terms from the $Q_i$ can be absorbed into
    \begin{align*}
        C \eps^{-1} \int_X G^{p-1} \cdot \big( 1 + G \big) \vol + C \eps \int_X G^{p-1} \cdot G_1^2 \, \vol + C \eps \int_X G^{p-1} \cdot G_2 \, \vol
    \end{align*}
    when we apply Young's inequality.

    Proceeding with the computation we get
    \begin{align*}
        &(\alpha')^2 \Re \bigg( \int_X p G^{p-1} \cdot \Big( \frac{1}{2 |\Omega|_\omega} \Big) \cdot \inner{\nabla G_0}{\nabla G_1} \, \vol \bigg) \\
        &\leq C \eps^{-1} \int_X G^{p-1} \cdot \big( 1 + G \big) \vol + C \eps \int_X G^{p-1} \cdot G_1^2 \, \vol + C \eps \int_X G^{p-1} \cdot G_2 \, \vol \\
        &\quad + B_{max} (\alpha')^2 p \int_X G^{p-1} \cdot \big( 2 |\Rm| \cdot |D \Rm| + 4 |DT| \cdot |D^2 T| + 2 |F| \cdot |DF| \big) \\
        &\qquad \qquad \qquad \qquad \qquad \qquad \cdot \big( 4 |D \Rm| \cdot |D^2 \Rm| + 6 |D^2 T| \cdot |D^3 T| + 4 |DF| \cdot |D^2 F| \big) \, \vol \\
        &\leq C \eps^{-1} \int_X G^{p-1} \cdot \big( 1 + G \big) \vol \\
        &\qquad + \big[ C \eps + \frac{1}{2} B_{min} (\alpha')^2 p \big] \int_X G^{p-1} \cdot G_1^2 \, \vol + \big[ C \eps + 432 B_{max}^2 B_{min}^{-1} C_0^2 (\alpha')^2 p \big] \int_X G^{p-1} \cdot G_2 \, \vol.
    \end{align*}

    For the $\mathbf{(A)}$ term, using integration by parts and the divergence theorem, we see for example that
    \begin{align*}
        &2 \Re \bigg( \int_X p G^{p-1} \cdot (\alpha')^3 G_1 \cdot \Big( \nablabar_{\overline{j}} \Binner{\Big( \frac{1}{2 |\Omega|_\omega} \Big) \nabla (\Rm * \Rm)}{\Rm}^{\overline{j}} \Big) \, \vol \bigg) \\
        &\qquad + 2 \Re \bigg( \int_X p G^{p-1} \cdot (\alpha')^3 G_1 \cdot \Big( \nablabar_{\overline{j}} \Binner{\Big( \frac{1}{2 |\Omega|_\omega} \Big) \nabla (F * F)}{\Rm}^{\overline{j}} \Big) \, \vol \bigg) \\
        &\leq C \eps^{-1} \int_X G^p \, \vol + C \eps \int_X G^{p-1} \cdot G_1^2 \, \vol \\
        &\qquad + 4 a_0 B_{max} C_0^2 (\alpha')^3 p (p-1) \int_X G^{p-2} \cdot |\nabla G| \cdot G_1 \cdot |D\Rm| \, \vol \\
        &\qquad + 4 a_0 B_{max} C_0^2 (\alpha')^3 p (p-1) \int_X G^{p-2} \cdot |\nabla G| \cdot G_1 \cdot |DF| \, \vol \\
        &\qquad + 4 a_0 B_{max} C_0^2 (\alpha')^3 p \int_X G^{p-1} \cdot \big( Q_1 + 4 |D \Rm| \cdot |D^2 \Rm| + 6 |D^2 T| \cdot |D^3 T| + 4 |DF| \cdot |D^2 F| \big) \cdot |D \Rm| \, \vol \\
        &\qquad + 4 a_0 B_{max} C_0^2 (\alpha')^3 p \int_X G^{p-1} \cdot \big( Q_1 + 4 |D \Rm| \cdot |D^2 \Rm| + 6 |D^2 T| \cdot |D^3 T| + 4 |DF| \cdot |D^2 F| \big) \cdot |DF| \, \vol \\
        &\leq C \eps^{-1} \int_X G^p \, \vol + C \eps \int_X G^{p-1} \cdot G_1^2 \, \vol \\
        &\qquad + 4 a_0 B_{max} C_0^2 (\alpha')^2 p (p-1) \int_X G^{p-3} \cdot |\nabla G|^2 \cdot G_1 \, \vol + 2 a_0 B_{max} C_0^2 (\alpha')^4 p (p-1) \int_X G^{p-1} \cdot G_1^2 \, \vol \\
        &\qquad + 12 a_0 B_{max} C_0^2 (\alpha')^4 p \int_X G^{p-1} \cdot G_1^2 \, \vol + 24 a_0 B_{max} C_0^2 (\alpha')^2 p \int_X G^{p-1} \cdot G_2 \, \vol \\
        &= C \eps^{-1} \int_X G^p \, \vol \\
        &\qquad + \big[ C \eps + 2 a_0 B_{max} C_0^2 (\alpha')^4 p (p-1) + 12 a_0 B_{max} C_0^2 (\alpha')^4 p \big] \int_X G^{p-1} \cdot G_1^2 \, \vol \\
        &\qquad + 24 a_0 B_{max} C_0^2 (\alpha')^2 p \int_X G^{p-1} \cdot G_2 \, \vol + 4 a_0 B_{max} C_0^2 (\alpha')^2 \mu^{-1} p (p-1) \int_X G^{p-2} \cdot |\nabla G|^2 \, \vol.
    \end{align*}

    This can be repeated for the other parts of the $\mathbf{(A)}$ term.

    The $\mathbf{(B)}$ term needs to be treated more delicately. Analogous computations show that
    {\allowdisplaybreaks
    \begin{align*}
        &2 \Re \bigg( \int_X p G^{p-1} \cdot \alpha' \big[ (\alpha')^2 G_0 + \mu \big] \cdot \Big( \nablabar_{\overline{j}} \Binner{ \Big( \frac{1}{2 |\Omega|_\omega} \Big) \nabla^2 (\Rm * \Rm)}{\nabla \Rm}^{\overline{j}} \Big) \, \vol \bigg) \\
        &\qquad + 2 \Re \bigg( \int_X p G^{p-1} \cdot \alpha' \big[ (\alpha')^2 G_0 + \mu \big] \cdot \Big( \nablabar_{\overline{j}} \Binner{ \Big( \frac{1}{2 |\Omega|_\omega} \Big) \nabla^2 (F * F)}{\nabla \Rm}^{\overline{j}} \Big) \, \vol \bigg) \\
        &\leq C \eps^{-1} \int_X G^p \, \vol + C \eps \int_X G^{p-1} \cdot G_1^2 \, \vol + C \eps \int_X G^{p-1} \cdot G_2 \, \vol \\
        &\qquad + 2 a_0 B_{max} p (p-1) \int_X G^{p-2} \cdot |\nabla G| \cdot \alpha' \big[ (\alpha')^2 G_0 + \mu \big] \cdot \big( 2 |\Rm| \cdot |D^2 \Rm| + 2 |D \Rm|^2 \big) \cdot |D \Rm| \, \vol \\
        &\qquad + 2 a_0 B_{max} p (p-1) \int_X G^{p-2} \cdot |\nabla G| \cdot \alpha' \big[ (\alpha')^2 G_0 + \mu \big] \cdot \big( 2 |F| \cdot |D^2 F| + 2 |D F|^2 \big) \cdot |D \Rm| \, \vol \\
        &\qquad + 2 a_0 B_{max} (\alpha')^3 p \int_X G^{p-1} \cdot \big( Q_0 + 2 |\Rm| \cdot |D\Rm| + 4 |DT|^2 \cdot |D^2 T| + 2 |F| \cdot |DF| \big) \\
        &\qquad \qquad \qquad \qquad \qquad \qquad \qquad \cdot \big( 2 |\Rm| \cdot |D^2 \Rm| + 2 |D \Rm|^2 \big) \cdot |D \Rm| \, \vol \\
        &\qquad + 2 a_0 B_{max} (\alpha')^3 p \int_X G^{p-1} \cdot \big( Q_0 + 2 |\Rm| \cdot |D\Rm| + 4 |DT| \cdot |D^2 T| + 2 |F| \cdot |DF| \big) \\
        &\qquad \qquad \qquad \qquad \qquad \qquad \qquad \cdot \big( 2 |F| \cdot |D^2 F| + 2 |D F|^2 \big) \cdot |D \Rm| \, \vol \\
        &\leq C \eps^{-1} \int_X G^p \, \vol + C \eps \int_X G^{p-1} \cdot G_1^2 \, \vol + C \eps \int_X G^{p-1} \cdot G_2 \, \vol \\*
        &\qquad + \frac{1}{10} B_{min} p (p-1) \int_X G^{p-3} \cdot \big[ (\alpha')^2 G_0 + \mu \big] \cdot |D\Rm|^2 \cdot |\nabla G|^2 \, \vol \\
        &\qquad + 160 a_0^2 B_{max}^2 B_{min}^{-1} (\alpha')^2 p (p-1) \int_X G^{p-1} \cdot \big[ (\alpha')^2 G_0 + \mu \big] \cdot G_1^2 \, \vol \\
        &\qquad + 160 a_0^2 B_{max}^2 B_{min}^{-1} C_0^2 (\alpha')^2 p (p-1) \int_X G^{p-1} \cdot \big[ (\alpha')^2 G_0 + \mu \big] \cdot G_2 \, \vol \\
        &\qquad + 24 a_0 B_{max} C_0 (\alpha')^3 p \int_X G^{p-1} \cdot G_1^2 \, \vol + 16 a_0 B_{max} C_0^2 (\alpha')^4 p \int_X G^{p-1} \cdot G_1^2 \, \vol \\
        &\qquad + 16 a_0 B_{max} C_0^2 (\alpha')^2 p \int_X G^{p-1} \cdot G_2 \, \vol \\
        &\leq C \eps^{-1} \int_X G^p \, \vol \\
        &\qquad + \big[ C \eps + 160 a_0^2 B_{max}^2 B_{min}^{-1} (\alpha')^2 \mu p (p-1) + 480 a_0^2 B_{max}^2 B_{min}^{-1} C_0^2 (\alpha')^4 p (p-1) \\
        &\qquad \qquad + 24 a_0 B_{max} C_0 (\alpha')^3 p + 16 a_0 B_{max} C_0^2 (\alpha')^4 p \big] \int_X G^{p-1} \cdot G_1^2 \, \vol \\
        &\qquad + \big[ C \eps + 160 a_0^2 B_{max}^2 B_{min}^{-1} C_0^2 (\alpha')^2 \mu p (p-1) + 480 a_0^2 B_{max}^2 B_{min}^{-1} C_0^4 (\alpha')^4 p (p-1) \\
        &\qquad \qquad + 16 a_0 B_{max} C_0^2 (\alpha')^2 p \big] \int_X G^{p-1} \cdot G_2 \, \vol \\
        &\qquad + \big[ \frac{1}{10} B_{min} p (p-1) \big] \int_X G^{p-2} \cdot |\nabla G|^2 \, \vol.
    \end{align*}}

    This computation can also be repeated for the other parts of the $\mathbf{(B)}$ term.

    By combining everything thus far, we get
    \begin{align*}
        \del_t \bigg( \int_X G^p \, \vol \bigg) &\leq C \eps^{-1} \int_X \big( 1 + G^p \big) \, \vol \\
        &\qquad + \big[ C \eps + 800 a_0^2 B_{max}^2 B_{min}^{-1} (\alpha')^2 \mu p (p-1) + 2400 a_0^2 B_{max}^2 B_{min}^{-1} C_0^2 (\alpha')^4 p (p-1) \\
        &\qquad \qquad + 10 a_0 B_{max} C_0^{-1} \alpha' \mu p + 158 a_0 B_{max} C_0 (\alpha')^3 p + 6 a_0 B_{max} C_0^2 (\alpha')^4 p (p-1) \\
        &\qquad \qquad + 116 a_0 B_{max} C_0^2 (\alpha')^4 p - \frac{1}{2} B_{min} (\alpha')^2 p \big] \int_X G^{p-1} \cdot G_1^2 \, \vol \\
        &\qquad + \big[ C \eps + 800 a_0^2 B_{max}^2 B_{min}^{-1} C_0^2 (\alpha')^2 \mu p (p-1) + 2400 a_0^2 B_{max}^2 B_{min}^{-1} C_0^4 (\alpha')^4 p (p-1) \\
        &\qquad \qquad + 142 a_0 B_{max} C_0^2 (\alpha')^2 p + 36 a_0 B_{max} C_0^3 (\alpha')^3 p + 12 a_0 B_{max} C_0 \alpha' \mu p \\
        &\qquad \qquad + 432 B_{max}^2 B_{min}^{-1} C_0^2 (\alpha')^2 p - B_{min} \mu p \big] \int_X G^{p-1} \cdot G_2 \, \vol \\
        &\qquad + \big[ C \eps + 12 a_0 B_{max} C_0^2 (\alpha')^2 \mu^{-1} p (p-1) - \frac{1}{2} B_{min} p (p-1) \big] \int_X G^{p-1} \cdot |\nabla G|^2 \, \vol.
    \end{align*}

    Reading off the coefficients and cancelling some common factors, we see that if the inequalities
    \begin{gather*}
        800 a_0^2 B_{max}^2 B_{min}^{-1} \mu (p-1) + 2400 a_0^2 B_{max}^2 B_{min}^{-1} C_0^2 (\alpha')^2 (p-1) + 10 a_0 B_{max} C_0^{-1} (\alpha')^{-1} \mu \\
        \qquad + 158 a_0 B_{max} C_0 \alpha' + 6 a_0 B_{max} C_0^2 (\alpha')^2 (p-1) + 116 a_0 B_{max} C_0^2 (\alpha')^2 < \frac{1}{2} B_{min}, \numberthis \label{eqn-system-1} \\[5pt]
        800 a_0^2 B_{max}^2 B_{min}^{-1} C_0^2 (\alpha')^2 (p-1) + 2400 a_0^2 B_{max}^2 B_{min}^{-1} C_0^4 (\alpha')^4 \mu^{-1} (p-1) + 142 a_0 B_{max} C_0^2 (\alpha')^2 \mu^{-1} \\
        + 36 a_0 B_{max} C_0^3 (\alpha')^3 \mu^{-1} + 12 a_0 B_{max} C_0 \alpha' + 432 B_{max}^2 B_{min}^{-1} C_0^2 (\alpha')^2 \mu^{-1} < B_{min}, \numberthis \label{eqn-system-2} \\[5pt]
        12 a_0 B_{max} C_0^2 (\alpha')^2 \mu^{-1} < \frac{1}{2} B_{min} \numberthis \label{eqn-system-3}
    \end{gather*}
    are satisfied, then we are left with
    \begin{align*}
        \del_t \bigg( \int_X G^p \, \vol \bigg) \leq C + C \int_X G^p \, \vol.
    \end{align*}
    
    If we suppose that
    \begin{align*}
        \alpha' < M \cdot \frac{B_{min}^2}{a_0 B_{max}^2 C_0 p} \text{ and } \mu = N \cdot \frac{B_{min} C_0 \alpha'}{a_0 B_{max}} < MN \cdot \frac{B_{min}^3}{a_0^2 B_{max}^3 p}
    \end{align*}
    for some positive constants $M$ and $N$, the system \eqref{eqn-system-1} - \eqref{eqn-system-3} holds if
    \begin{gather*}
        158 M + 10 N + 800 MN + 2522 M^2 < \frac{1}{2}, \numberthis \label{eqn-system-b-1} \\[5pt]
        2400 \frac{M^3}{N} + 36 \frac{M^2}{N} + 574 \frac{M}{N} + 12 M + 800 M^2 < 1, \numberthis \label{eqn-system-b-2} \\[5pt]
        12 \frac{M}{N} < \frac{1}{2}, \numberthis \label{eqn-system-b-3}
    \end{gather*}
    where we have used that $B_{min} \leq B_{max}$, $p \geq 3$ and $a_0 \geq 1$.

    One can check that solutions to \eqref{eqn-system-b-1} -  \eqref{eqn-system-b-3} do exist. For instance, we can take $M = 8.413 \times 10^{-5}$ and $N = 4.834 \times 10^{-2}$. Proceeding using Gr\"{o}nwall's inequality as before shows that $|G|_{L^p}$ is uniformly bounded along the flow, which in turn proves the boundedness of $|D\Rm|_{L^{2p}}$, $|D^2 T|_{L^{2p}}$, and $|DF|_{L^{2p}}$.
\end{proof}

Applying the previous result and analysis using the test function
\begin{align*}
    G' = \big[ (\alpha')^2 G_0 + \mu' \big] \cdot G_2
\end{align*}
(where we determine $\mu'$ later in a similar manner) yields a similar result:

\begin{propn}
\label{propn-G_k+1-k=1}
    Let $k = 1$ and $p \geq 3$. Under the assumptions \eqref{eqn-ass-1} - \eqref{eqn-ass-3}, there exists a positive constant $M'$ such that if
    \begin{align*}
        \alpha' < M' \cdot \frac{B_{min}^2}{a_0 B_{max}^2 C_0 p},
    \end{align*}
    then
    \begin{align*}
        |D^2 \Rm|_{L^{2p}}, |D^3 T|_{L^{2p}} \text{ and } |D^2 F|_{L^{2p}}
    \end{align*}
    are uniformly bounded along the flow.
\end{propn}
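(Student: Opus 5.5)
We follow the proof of Proposition \ref{propn-G_k-k=1} with the test function
\begin{align*}
    G' = \big[ (\alpha')^2 G_0 + \mu' \big] \cdot G_2.
\end{align*}
The only new ingredient is that we may now use the conclusion of Proposition \ref{propn-G_k-k=1}, namely uniform $L^{2p}$-bounds on $D\Rm$, $D^2 T$ and $DF$. This plays the role that Proposition \ref{propn-G_k} played for Proposition \ref{propn-G_k+1}. In particular, the analogue of the $k=2$ phenomenon appears: the lower-order quartic terms now involve $G_1^2$ rather than $G_2^2$.

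\textbf{Step 1: pointwise estimate for $G_2$.} As in Proposition \ref{propn-G_k+1}, we derive
\begin{align*}
    \del_t G_2 \leq \Big( \frac{1}{2|\Omega|_\omega} \Big) \cdot \Big( \frac{1}{2} \Delta_R G_2 \Big) + (\text{divergence terms}) + C\eps^{-1}\big(1 + G_1^2 + G_1 G_2 + G_2\big) + \big[ C\eps + c\, a_0 B_{max} C_0 \alpha' - B_{min} \big] G_3 .
\end{align*}
Here the divergence terms are the $\alpha'$ terms with $\nabla^{m+1}\nablabar^l(\Rm*\Rm)$ and $\nabla^{m+1}\nablabar^l(F*F)$ paired against $D^2\Rm$ and $D^3T$, and $c$ is a numerical constant. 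The mixed products such as $D\Rm * D\Rm * D^3\Rm$ are split by Young's inequality in the same ``balanced'' way as before, putting $C_0^{-1}\alpha'$ on the quartic factor and $C_0\alpha'$ on $G_3$. This produces a term $C_0^{-1}\alpha' G_1 G_2$ or $C_0^{-1}\alpha' G_2^{3/2}$, which replaces the $G_1^2$ term of the previous proof. We then compute $\del_t G'$, obtaining the cross term $(\alpha')^2\Re\inner{\nabla G_0}{\nabla G_2}$ and the analogues of $\mathbf{(A)}$ and $\mathbf{(B)}$.

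\textbf{Step 2: $L^p$ estimate.} We integrate $p G'^{p-1}\del_t G'$ and treat each term exactly as before:
\begin{itemize}[label={}]
    \item the Laplacian term gives $-B_{min}p(p-1)\int G'^{p-2}|\nabla G'|^2$;
    \item the cross term is absorbed using $|\nabla G_0| \lesssim C_0 G_1^{1/2}$ and $|\nabla G_2| \lesssim G_2^{1/2}G_3^{1/2} + Q_2$;
    \item $\mathbf{(A)}$ and $\mathbf{(B)}$ are integrated by parts, with Young's inequality splitting the result into $\int G'^{p-1}G_2^2$, $\int G'^{p-1}G_3$ and $\int G'^{p-2}|\nabla G'|^2$.
\end{itemize}
The new lower-order contributions, of the form $\int G'^{p-1}(G_1^2 + G_1 G_2)$, are handled by H\"older's inequality together with the uniform $L^{2p}$-bound on $G_1^{1/2}$-type quantities from Proposition \ref{propn-G_k-k=1}. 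This may require applying that proposition with $2p$ in place of $p$, which is why the constant becomes $M'$ rather than $M$. Since $G' \geq \mu' G_2$, the remaining terms are bounded by $C + C\int G'^p$.

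\textbf{Step 3: choice of constants.} We set
\begin{align*}
    \mu' = N' \cdot \frac{B_{min}C_0\alpha'}{a_0 B_{max}}, \qquad \alpha' < M' \cdot \frac{B_{min}^2}{a_0 B_{max}^2 C_0 p}.
\end{align*}
We then read off three coefficient inequalities analogous to \eqref{eqn-system-b-1}--\eqref{eqn-system-b-3}, for the $G_2^2$, $G_3$ and $|\nabla G'|^2$ coefficients. Their constants will be larger because the Leibniz expansion of $\nabla^3(\Rm*\Rm)$ has more terms, but they keep the same structure in $M'$ and $M'/N'$. We exhibit explicit small $M'$ and $N'$ satisfying them, and also take $M' \leq M/2$ so that Proposition \ref{propn-G_k-k=1} applies at exponent $2p$. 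Gr\"onwall's inequality then bounds $\int G'^p$, and hence $|D^2\Rm|_{L^{2p}}$, $|D^3T|_{L^{2p}}$ and $|D^2F|_{L^{2p}}$.

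\textbf{Main obstacle.} The hardest part will be the $\mathbf{(B)}$-type term, which involves $\mu'\alpha'\,\nabla^3(\Rm*\Rm)$ paired with $D^2\Rm$. After integration by parts it contains $|\Rm|\,|D^3\Rm| + |D\Rm|\,|D^2\Rm|$ multiplied by $|\nabla G'|\,|D^2\Rm|$. The middle product $|D\Rm|\,|D^2\Rm|^2$ is not controlled pointwise by $G_2^2$ and $G_3$ with dimensionally balanced constants. We plan to bound it by $\eps\, G_2^2 + \eps^{-1}|D\Rm|^2 G_2$ and then control $\int G'^{p-1}|D\Rm|^2 G_2$ by H\"older's inequality against the $L^{2p}$-bound on $D\Rm$. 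If this loses too much integrability, the fallback is to add a multiple of $(\alpha')^2 G_1$ to the prefactor of the test function, giving $[(\alpha')^2(G_0 + \alpha' G_1) + \mu']G_2$ up to units, so that the favourable $-G_2$ term from $\del_t G_1$ absorbs it.
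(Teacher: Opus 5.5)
Your test function, the scaling $\mu' = N' B_{min} C_0 \alpha'/(a_0 B_{max})$ and the Gr\"onwall conclusion all match the paper. However, the mechanism you propose for the borderline quartic terms fails.

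The only favourable terms in $\del_t G'$ are:
\begin{itemize}
\item the gradient term;
\item $-B_{min}\mu' G_3$, coming from $\del_t G_2$;
\item $-B_{min}(\alpha')^2 G_1 G_2$, which arises from $(\alpha')^2(\del_t G_0)\,G_2$ because $\del_t G_0$ contains $-B_{min}G_1$.
\end{itemize}
There is no negative multiple of $G_2^2$ anywhere. So the coefficient inequality you intend to impose on $\int (G')^{p-1}G_2^2$ can never hold. Your plan for the main obstacle, splitting into $\eps G_2^2 + \eps^{-1}|D\Rm|^2 G_2$, leaves $\eps\int (G')^{p-1}G_2^2$. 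Since $G_2$ is comparable to $G'$, this is comparable to $\int (G')^{p+1}$, and nothing available controls it.

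Likewise, $\int (G')^{p-1}G_1G_2$ is not bounded by $C + C\int (G')^p$ via H\"older. The inequality $G'\geq \mu' G_2$ only turns it into $(\mu')^{-1}\int (G')^pG_1$, which would require $G_1\in L^\infty$, i.e.\ exactly the pointwise bound on $D\Rm$ that is not yet known. An integral bound $G_1\in L^q$ with $q>3$ could only be exploited through the Sobolev inequality plus absorption into the gradient term, which you do not invoke. Relatedly, the $G_1G_2$ term in your Step 1 must carry a small coefficient $C\eps + c\,a_0 B_{max} C_0^{-1}\alpha'$, not $C\eps^{-1}$.

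The missing idea is to route every borderline term into $G_1G_2$, never into $G_2^2$, and absorb it with $-B_{min}(\alpha')^2G_1G_2$.
\begin{itemize}
\item In the pointwise estimate, $\alpha'|D\Rm|\,|D^2\Rm|\,|D^3\Rm| \leq C_0^{-1}\alpha' G_1G_2 + C_0\alpha' G_3$. After multiplying by $\mu'$, the first piece is $C_0^{-1}\alpha'\mu' G_1G_2$, which is comparable to $(\alpha')^2G_1G_2$ precisely because $\mu'\sim C_0\alpha'$.
\item For your main obstacle, integrate $\mathbf{(B')}$ by parts and apply Young's inequality together with $[(\alpha')^2G_0+\mu']\,|D^2\Rm|^2\leq G'$:
\[
(G')^{p-2}|\nabla G'|\,\alpha'\big[(\alpha')^2G_0+\mu'\big]|D\Rm|\,|D^2\Rm|^2 \leq \delta\,(G')^{p-2}|\nabla G'|^2 + \frac{1}{4\delta}(\alpha')^2\big[(\alpha')^2G_0+\mu'\big](G')^{p-1}G_1G_2 .
\]
The first piece goes into the gradient term. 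The second is absorbed by $-B_{min}(\alpha')^2G_1G_2$ once $C_0\alpha'$ and $\mu'$ are small, using that $G_0$ is bounded by \eqref{eqn-ass-2}.
\item The cross term $(\alpha')^2\inner{\nabla G_0}{\nabla G_2}$ is split the same way into $G_1G_2$ and $C_0^2G_3$.
\end{itemize}
With this routing, no modification of the test function is needed. The three coefficient inequalities are for $G_1G_2$, $G_3$ and $|\nabla G'|^2$. Proposition \ref{propn-G_k-k=1} (at exponent $2p$) is used only for the genuinely lower-order term $C\eps^{-1}\int (G')^{p-1}G_1^2 \leq C\int (G')^p + C\int G_1^{2p}$.
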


\begin{proof}
    Using the pointwise estimate
    \begin{align*}
        \del_t G_2 &\leq \Big( \frac{1}{2 |\Omega|_\omega} \Big) \cdot \Big( \frac{1}{2} \Delta_R \big( G_2 \big) \Big) \\
        &\qquad + \sum_{m+l = 2} 2 \alpha' \Re \Big( \nablabar_{\overline{j}} \Binner{\Big( \frac{1}{2 |\Omega|_\omega} \Big) \nabla^{m+1} \nablabar^l (\Rm * \Rm)}{\nabla^m \nablabar^l \Rm}^{\overline{j}} \Big) \\
        &\qquad + \sum_{m+l = 2} 2 \alpha' \Re \Big( \nablabar_{\overline{j}} \Binner{\Big( \frac{1}{2 |\Omega|_\omega} \Big) \nabla^{m+1} \nablabar^l (F * F)}{\nabla^m \nablabar^l \Rm}^{\overline{j}} \Big) \\
        &\qquad + \sum_{m' + l' = 3} 2 \alpha' \Re \Big( \nabla_i \Binner{\Big( \frac{1}{2 |\Omega|_\omega} \nabla^{m'} \nablabar^{l'} (\Rm * \Rm)}{\nabla^{m'} \nablabar^{l'} T}^i \Big) \\
        &\qquad + \sum_{m' + l' = 3} 2 \alpha' \Re \Big( \nabla_i \Binner{\Big( \frac{1}{2 |\Omega|_\omega} \nabla^{m'} \nablabar^{l'} (F * F)}{\nabla^{m'} \nablabar^{l'} T}^i \Big) \\
        &\qquad + C \eps^{-1} \big( 1 + G_1^2 + G_2 \big) + \big[ C \eps + 42 a_0 B_{max} C_0^{-1} \alpha' \big] G_1 \cdot G_2 + \big[ C \eps + 20 a_0 B_{max} C_0 \alpha' - B_{min} \big] G_3,
    \end{align*}
    we obtain that
    \begin{align*}
        \del_t G' &\leq \Big( \frac{1}{2 |\Omega|_\omega} \Big) \cdot \Big( \frac{1}{2} \Delta_R \big( G' \big) \Big) - \Big( \frac{1}{2 |\Omega|_\omega} \Big) \cdot (\alpha')^2 \Re \inner{\nabla G_0}{\nabla G_2} \\
        &\qquad + C \eps^{-1} \big( 1 + G_1^2 + G' \big) + 2 \Re \mathbf{(A')} + 2 \Re \mathbf{(B')} \\
        &\qquad + \big[ C \eps + 134 a_0 B_{max} C_0 (\alpha')^3 + 42 a_0 B_{max} C_0^{-1} \alpha' \mu' - B_{min} (\alpha')^2 \big] G_1 \cdot G_2 \\
        &\qquad + \big[ C \eps + 60 a_0 B_{max} C_0^3 (\alpha')^3 + 20 a_0 B_{max} C_0 \alpha' \mu' - B_{min} \mu' \big] G_3,
    \end{align*}
    and
    \begin{align*}
        \del_t \bigg( \int_X (G')^p \, \vol \bigg) &\leq \int_X p (G')^{p-1} \cdot \Big( \frac{1}{2 |\Omega|_\omega} \Big) \cdot \Big( \frac{1}{2} \Delta_R \big( G' \big) \Big) \, \vol \\
        &\qquad + (\alpha')^2 \Re \bigg( \int_X p (G')^{p-1} \cdot \Big( \frac{1}{2 |\Omega|_\omega} \Big) \cdot \inner{\nabla G_0}{\nabla G_2} \, \vol \bigg) + C \eps^{-1} \int_X (G')^{p-1} \cdot \big( 1 + G_1^2 + G' \big) \, \vol \\
        &\qquad + 2 \Re \bigg( \int_X p (G')^{p-1} \cdot \mathbf{(A')} \, \vol \bigg)  + 2 \Re \bigg( \int_X p (G')^{p-1} \cdot \mathbf{(B')} \, \vol \bigg) \\
        &\qquad + \big[ C \eps + 134 a_0 B_{max} C_0 (\alpha')^3 + 42 a_0 B_{max} C_0^{-1} \alpha' \mu' - B_{min} (\alpha')^2 \big] \int_X p (G')^{p-1} \cdot G_1^2 \, \vol \\
        &\qquad + \big[ C \eps + 60 a_0 B_{max} C_0^3 (\alpha')^3 + 20 a_0 B_{max} C_0 \alpha' \mu' - B_{min} \mu' \big] \int_X p (G')^{p-1} \cdot G_3 \, \vol.
    \end{align*}
    where the $\mathbf{(A')}$ and $\mathbf{(B')}$ terms are defined analogously to before.

    We can check that
    \begin{align*}
        &(\alpha') \Re \bigg( \int_X p (G')^{p-1} \cdot \Big( \frac{1}{2 |\Omega|_\omega} \Big) \cdot \inner{\nabla G_0}{\nabla G_1} \, \vol \bigg) \\
        &\leq C \eps^{-1} \int_X (G')^{p-1} \cdot \big( 1 + G_1^2 + G' \big) \, \vol + C \eps \int_X (G')^{p-1} \cdot G_1 \cdot G_2 \, \vol + C \eps \int_X (G')^{p-1} \cdot G_3 \, \vol \\
        &\qquad + B_{max} (\alpha')^2 p \int_X (G')^{p-1} \cdot \big( 2 |\Rm| \cdot |D \Rm| + 4 |DT| \cdot |D^2 T| + 2 |F| \cdot |DF| \big) \\
        &\qquad \qquad \qquad \qquad \qquad \qquad \qquad \cdot \big( 6 |D^2 \Rm| \cdot |D^3 \Rm| + 8 |D^3 T| \cdot |D^4 T| + 6 |D^2 F| \cdot |D^3 F| \big) \, \vol \\
        &\leq  C \eps^{-1} \int_X (G')^{p-1} \cdot \big( 1 + G_1^2 + G' \big) \, \vol \\
        &\qquad + \big[ C \eps + \frac{1}{2} B_{min} (\alpha')^2 p \big] \int_X (G')^{p-1} \cdot G_1 \cdot G_2 \, \vol + \big[ C \eps + 768 B_{max}^2 B_{min}^{-1} C_0^2 (\alpha')^2 p \big] \int_X (G')^{p-1} \cdot G_3 \, \vol.
    \end{align*}

    Further the $\mathbf{(A')}$ term is bounded above by
    \begin{align*}
        &2 \Re \bigg( \int_X p (G')^{p-1} \cdot \mathbf{(A')} \, \vol \bigg) \\
        &\leq C \eps^{-1} \int_X (G')^p \, \vol \\
        &\qquad + \big[ C \eps + 6 a_0 B_{max} C_0^2 (\alpha')^4 p (p-1) + 48 a_0 B_{max} C_0^2 (\alpha')^4 p \big] \int_X (G')^{p-1} \cdot G_1 \cdot G_2 \, \vol \\
        &\qquad + 96 a_0 B_{max} C_0^2 (\alpha')^2 p \int_X (G')^{p-1} \cdot G_3 \, \vol + 12 a_0 B_{max} C_0^2 (\alpha')^2 (\mu')^{-1} p (p-1) \int_X (G')^{p-2} \cdot |\nabla G'|^2 \, \vol,
    \end{align*}
    while the $\mathbf{(B')}$ term is bounded above by
    \begin{align*}
        &2 \Re \bigg( \int_X p (G')^{p-1} \cdot \mathbf{(B')} \, \vol \bigg) \\
        &\leq C \eps^{-1} \int_X (G')^p \, \vol \\
        &\qquad + \big[ C \eps + 14112 a_0^2 B_{max}^2 B_{min}^{-1} (\alpha')^2 \mu' p (p-1) + 42336 a_0^2 B_{max}^2 B_{min}^{-1} C_0^2 (\alpha')^4 p (p-1) \\
        &\qquad \qquad + 504 a_0 B_{max} C_0 (\alpha')^3 p + 112 a_0 B_{max} C_0^2 (\alpha')^4 p \big] \int_X (G')^{p-1} \cdot G_1 \cdot G_2 \, \vol \\
        &\qquad + \big[ C \eps + 1568 a_0^2 B_{max}^2 B_{min}^{-1} C_0^2 (\alpha')^2 \mu' p (p-1) + 4704 a_0^2 B_{max}^2 B_{min}^{-1} C_0^4 (\alpha')^4 p (p-1) \\
        &\qquad \qquad + 112 a_0 B_{max} C_0^2 (\alpha')^2 p \big] \int_X (G')^{p-1} \cdot G_3 \, \vol \\
        &\qquad + \big[ \frac{1}{2} B_{min} p (p-1) \big] \int_X (G')^{p-2} \cdot |\nabla G'|^2 \, \vol.
    \end{align*}

    Which leaves us with the system of inequalities
    \begin{gather*}
        14112 a_0^2 B_{max}^2 B_{min}^{-1} \mu' (p-1) + 42336 a_0^2 B_{max}^2 B_{min}^{-1} C_0^2 (\alpha')^2 (p-1) + 42 a_0 B_{max} C_0^{-1} (\alpha')^{-1} \mu \\
        + 638 a_0 B_{max} C_0 \alpha' + 6 a_0 B_{max} C_0^2 (\alpha')^2 (p-1) + 160 a_0 B_{max} C_0^2 (\alpha')^2 < \frac{1}{2} B_{min}, \numberthis \label{eqn-system-4} \\[5pt]
        1568 a_0^2 B_{max}^2 B_{min}^{-1} C_0^2 (\alpha')^2 (p-1) + 4704 a_0^2 B_{max}^2 B_{min}^{-1} C_0^4 (\alpha')^4 \mu^{-1} (p-1) + 208 a_0 B_{max} C_0^2 (\alpha')^2 \mu^{-1} \\
        + 60 a_0 B_{max} C_0^3 (\alpha')^3 \mu^{-1} + 20 a_0 B_{max} C_0 \alpha' + 768 B_{max}^2 B_{min}^{-1} C_0^2 (\alpha')^2 \mu^{-1} < B_{min}, \numberthis \label{eqn-system-5} \\[5pt]
        12 a_0 B_{max} C_0^2 (\alpha')^2 \mu^{-1} < \frac{1}{2} B_{min}. \numberthis \label{eqn-system-6}
    \end{gather*}

    Assuming
    \begin{align*}
        \alpha' < M' \cdot \frac{B_{min}^2}{a_0 B_{max}^2 C_0 p} \text{ and } \mu = N' \cdot \frac{B_{min} C_0 \alpha'}{a_0 B_{max}} < M' N' \cdot \frac{B_{min}^3}{a_0^2 B_{max}^3 p}
    \end{align*}
    for some positive constants $M'$ and $N'$, the system \eqref{eqn-system-1} - \eqref{eqn-system-3} holds if
    \begin{gather*}
        638 M' + 42 N' + 14112 M' N' + 42502 (M')^2 < \frac{1}{2}, \numberthis \label{eqn-system-b-4} \\[5pt]
        4704 \frac{(M')^3}{N'} + 60 \frac{(M')^2}{N'} + 976 \frac{M'}{N'} + 20 M + 1568 (M')^2 < 1, \numberthis \label{eqn-system-b-5} \\[5pt]
        12 \frac{M'}{N'} < \frac{1}{2}. \numberthis \label{eqn-system-b-6}
    \end{gather*}

    One can solve this system by, for example, setting $M' = 1.196 \times 10^{-5}$ and $N' = 1.1676 \times 10^{-2}$. Gr\"{o}nwall's inequality then implies that $|G'|_{L^p}$ and hence $|D^2 \Rm|_{L^{2p}}$, $|D^3 T|_{L^{2p}}$ and $|D^2 F|_{L^{2p}}$ are uniformly bounded.
\end{proof}

\begin{rmk}
    The grouping and use of Young's inequality in the previous results are likely non-optimal. It may be of particular interest to find larger values of $M$ and $M'$ that work for computational purposes.
\end{rmk}

Altogether, under the assumptions \eqref{eqn-ass-1} - \eqref{eqn-ass-3} for any $k \geq 1$, we have that if $\alpha'$ is sufficiently small, then we have bounds for the pairs $|D^k \Rm|_{L^{2p}}$ and $|D^{k+1} \Rm|_{L^{2p}}$, $|D^{k+1} \Rm|_{L^{2p}}$ and $|D^{k+2} T|_{L^{2p}}$, and $|D^k F|_{L^{2p}}$ and $|D^{k+1} F|_{L^{2p}}$. Since the (real) dimension of $X$ is $6$, if $p \geq 3$, we may use the Sobolev embedding theorem to turn these $L^{2p}$-bounds into pointwise ones again (as per \cite{Ham82}), and hence $|D^k \Rm|, |D^{k+1} T|$, and $|D^k F|$ are bounded. We can thus bootstrap to show that all derivatives of $\Rm$, $T$, and $F$ are bounded pointwise.

Putting everything together and applying a standard result to extend the flow (see for example \cite{PPZ18b, Sua25}), we get the main result:

\thmmain*

\section{Further Questions}
\label{sect-qns}

Since the (modified) coupled flow \eqref{eqn-anomaly-flow} evolves the Hermitian metric $H$ by the Donaldson heat flow, one might ask whether we can obtain similar results on the regularity of $H$. For example, Donaldson \cite{Don85} showed that the flow has a unique smooth solution for all time without any prior assumptions on $H$ of its curvature $F$.

To achieve such a result, certain difficulties need to be overcome, such as the base metric being non-K\"{a}hler in general and also that the base metric is evolving in tandem with the bundle metric. The former can likely be dealt with using methods from \cite{Zha05, CLZZ26}. The latter, however, seems to be a larger issue as the flow is no longer the gradient flow for the Yang--Mills energy and thus a direct adaptation of Donaldson's results do not work. It is possible that the gradient flow viewpoint of the anomaly flow in \cite{AMP24} can remedy this.

\bibliographystyle{alpha} 
\bibliography{bib} 

@article{AMP24, 
title={Geometric flows and supersymmetry}, 
volume={405}, 
DOI={10.1007/s00220-023-04910-7}, 
number={1}, 
journal={Communications in Mathematical Physics}, 
author={Ashmore, Anthony and Minasian, Ruben and Proto, Yann}, 
year={2024}, 
month={Jan},
note={DOI: \href{https://doi.org/10.1007/s00220-023-04910-7}{10.1007/s00220-023-04910-7}},
}

@article{BV22, 
title={On the stability of the anomaly flow},
volume={29},
DOI={10.4310/mrl.2022.v29.n2.a1},
number={2},
journal={Mathematical Research Letters}, 
author={Bedulli, Lucio and Vezzoni, Luigi},
year={2022}, 
pages={323–338},
note={DOI: \href{https://doi.org/10.4310/mrl.2022.v29.n2.a1}{10.4310/mrl.2022.v29.n2.a1}},
}

@misc{CLZZ26,
      title={Long-time behavior of the {H}ermitian--{Y}ang--{M}ills flow on non-{K}\"ahler manifolds}, 
      author={Zeng Chen and Chao Li and Chuanjing Zhang and Xi Zhang},
      year={2026},
      eprint={2601.05614},
      archivePrefix={arXiv},
      primaryClass={math.DG},
      url={https://arxiv.org/abs/2601.05614}, 
      note={arXiv: \href{https://arxiv.org/abs/2601.05614}{2601.05614}},
}

@article{Don85,
author = {Donaldson, Simon K.},
title = {Anti Self-Dual {Y}ang--{M}ills Connections Over Complex Algebraic Surfaces and Stable Vector Bundles},
journal = {Proceedings of the London Mathematical Society},
volume = {s3-50},
number = {1},
pages = {1-26},
doi = {https://doi.org/10.1112/plms/s3-50.1.1},
url = {https://londmathsoc.onlinelibrary.wiley.com/doi/abs/10.1112/plms/s3-50.1.1},
eprint = {https://londmathsoc.onlinelibrary.wiley.com/doi/pdf/10.1112/plms/s3-50.1.1},
year = {1985},
note = {DOI: \href{https://doi.org/10.1112/plms/s3-50.1.1}{10.1112/plms/s3-50.1.1}},
}

@article{FHP21,
    author = {Fei, Teng and Huang, Zhijie and Picard, S\'{e}bastien},
    title = {The Anomaly Flow over {R}iemann Surfaces},
    journal = {International Mathematics Research Notices},
    volume = {2021},
    number = {3},
    pages = {2134-2165},
    year = {2021},
    month = {02},
    issn = {1073-7928},
    doi = {10.1093/imrn/rnz076},
    url = {https://doi.org/10.1093/imrn/rnz076},
    eprint = {https://academic.oup.com/imrn/article-pdf/2021/3/2134/36114108/rnz076.pdf},
    note = {DOI: \href{https://doi.org/10.1093/imrn/rnz076}{10.1093/imrn/rnz076}},
}

@article{FP21, 
title={Anomaly flow and {T}-duality}, 
volume={17}, 
DOI={10.4310/pamq.2021.v17.n3.a11},
number={3},
journal={Pure and Applied Mathematics Quarterly}, 
author={Fei, Teng and Picard, S\'{e}bastien}, 
year={2021}, 
pages={1083–1112},
note={DOI: \href{https://doi.org/10.4310/pamq.2021.v17.n3.a11}{10.4310/pamq.2021.v17.n3.a11}},
}

@misc{GFMPS25,
      title={A parabolic flow for the large volume heterotic {$G_2$} system}, 
      author={Mario Garcia-Fernandez and Andres J. Moreno and Alec Payne and Jeffrey Streets},
      year={2025},
      eprint={2512.14317},
      archivePrefix={arXiv},
      primaryClass={math.DG},
      url={https://arxiv.org/abs/2512.14317}, 
      note={arXiv: \href{https://arxiv.org/abs/2512.14317}{2512.14317}},
}

@article{Ham82, 
    title={Three-manifolds with positive {R}icci curvature}, 
    volume={17}, DOI={10.4310/jdg/1214436922}, 
    number={2}, 
    journal={Journal of Differential Geometry}, 
    author={Hamilton, Richard S.}, 
    year={1982}, 
    month={Jan},
    note={DOI: \href{https://doi.org/10.1007/s10455-020-09710-z}{10.1007/s10455-020-09710-z}},
}

@article{Hul86,
title = {Compactifications of the heterotic superstring},
journal = {Physics Letters B},
volume = {178},
number = {4},
pages = {357-364},
year = {1986},
issn = {0370-2693},
doi = {https://doi.org/10.1016/0370-2693(86)91393-6},
url = {https://www.sciencedirect.com/science/article/pii/0370269386913936},
author = {Christopher M. Hull},
note  = {DOI: \href{https://doi.org/10.1016/0370-2693(86)91393-6}{10.1016/0370-2693(86)91393-6}},
}

@article{KPS26, 
    title={Flows of conformally coclosed {$G_2$}-structures with dilaton}, 
    volume={36}, 
    DOI={10.1007/s12220-026-02416-x}, 
    journal={The Journal of Geometric Analysis}, 
    author={Karigiannis, Spiro and Picard, Sébastien and Suan, Caleb}, 
    year={2026}, 
    month={Apr},
    note={DOI: \href{https://doi.org/10.1007/s12220-026-02416-x}{10.1007/s12220-026-02416-x}},
}

@article{Puj21,
title = {The {H}ull--{S}trominger system and the Anomaly flow on a class of solvmanifolds},
journal = {Journal of Geometry and Physics},
volume = {170},
pages = {104352},
year = {2021},
issn = {0393-0440},
doi = {https://doi.org/10.1016/j.geomphys.2021.104352},
url = {https://www.sciencedirect.com/science/article/pii/S0393044021001984},
author = {Mattia Pujia},
note={DOI: \href{https://doi.org/10.1016/j.geomphys.2021.104352}{10.1016/j.geomphys.2021.104352}}
}

@article{PPZ18a, 
title={The anomaly flow and the fu-yau equation},
volume={4}, DOI={10.1007/s40818-018-0049-9},
number={2}, 
journal={Annals of PDE},
author={Phong, Duong H. and Picard, S\'{e}bastien and Zhang, Xiangwen}, 
year={2018}, 
month={Jul},
note={DOI: \href{https://doi.org/10.1007/s40818-018-0049-9}{10.1007/s40818-018-0049-9}},
}

@article {PPZ18b,
    AUTHOR = {Phong, Duong H. and Picard, S\'{e}bastien and Zhang, Xiangwen},
     TITLE = {Anomaly flows},
   JOURNAL = {Comm. Anal. Geom.},
  FJOURNAL = {Communications in Analysis and Geometry},
    VOLUME = {26},
      YEAR = {2018},
    NUMBER = {4},
     PAGES = {955--1008},
      ISSN = {1019-8385,1944-9992},
   MRCLASS = {53C44 (32Q99 53C55 81T30)},
  MRNUMBER = {3853932},
MRREVIEWER = {Masaya\ Kawamura},
       DOI = {10.4310/CAG.2018.v26.n4.a9},
       URL = {https://doi.org/10.4310/CAG.2018.v26.n4.a9},
      NOTE = {DOI: \href{https://doi.org/10.4310/CAG.2018.v26.n4.a9}{10.4310/CAG.2018.v26.n4.a9}}
}

@article{PPZ18c, 
    title={Geometric flows and {S}trominger Systems}, 
    volume={288}, DOI={10.1007/s00209-017-1879-y}, 
    number={1–2}, 
    journal={Mathematische Zeitschrift}, 
    author={Phong, Duong H. and Picard, S\'{e}bastien and Zhang, Xiangwen}, 
    year={2018}, 
    month={Mar}, 
    pages={101–113},
    note={DOI: \href{https://doi.org/10.1007/s00209-017-1879-y}{10.1007/s00209-017-1879-y}}
}

@article{PPZ19a,
  title={The Anomaly flow on unimodular Lie groups},
  author={Phong, Duong H. and Picard, S\'{e}bastien and Zhang, Xiangwen},
  journal={Advances in complex geometry},
  volume={735},
  pages={217--237},
  year={2019},
  publisher={American Mathematical Society},
  eprint={1705.09763},
  archivePrefix={arXiv},
  primaryClass={math.DG}
}

@article{PU21, 
    title={The anomaly flow on Nilmanifolds}, 
    volume={60}, 
    DOI={10.1007/s10455-021-09781-6}, 
    number={3}, 
    journal={Annals of Global Analysis and Geometry}, 
    author={Pujia, Mattia and Ugarte, Luis}, 
    year={2021}, 
    month={Jun}, 
    pages={501–537},
    note= {DOI: \href{https://doi.org/10.1007/s10455-021-09781-6}{10.1007/s10455-021-09781-6}}
}

@article{Str86,
title = {Superstrings with torsion},
journal = {Nuclear Physics B},
volume = {274},
number = {2},
pages = {253-284},
year = {1986},
issn = {0550-3213},
doi = {https://doi.org/10.1016/0550-3213(86)90286-5},
url = {https://www.sciencedirect.com/science/article/pii/0550321386902865},
author = {Andrew Strominger},
note = {DOI: \href{https://doi.org/10.1016/0550-3213(86)90286-5}{10.1016/0550-3213(86)90286-5}},
}

@article{Sua25, 
    title={Anomaly flow: Shi-type estimates and long-time existence}, 
    DOI={10.1515/crelle-2025-0051}, 
    journal={Journal für die reine und angewandte Mathematik (Crelles Journal)}, 
    author={Suan, Caleb}, 
    year={2025}, 
    month={Jul},
    note={DOI: \href{https://doi.org/10.1515/crelle-2025-0051}{10.1515/crelle-2025-0051}}
}

@article{Zha05,
title = {Hermitian–-{E}instein metrics on holomorphic vector bundles over {H}ermitian manifolds},
journal = {Journal of Geometry and Physics},
volume = {53},
number = {3},
pages = {315-335},
year = {2005},
issn = {0393-0440},
doi = {https://doi.org/10.1016/j.geomphys.2004.07.002},
url = {https://www.sciencedirect.com/science/article/pii/S0393044004001111},
author = {Xi Zhang},
note = {DOI: \href{https://doi.org/10.1016/j.geomphys.2004.07.002}{10.1016/j.geomphys.2004.07.002}},
}

\end{document}